\documentclass[12pt]{amsart}
\usepackage[T1]{fontenc}
\usepackage{mathptmx}

\usepackage{comment}
\usepackage[colorlinks=true,linkcolor=magenta,citecolor=blue]{hyperref}     
\usepackage{mathtools}
\usepackage{amsmath, amssymb,amsthm}
\usepackage[capitalise]{cleveref}   
\usepackage{array}
\usepackage{graphicx}
\usepackage{float}
\usepackage{caption,subcaption}
\usepackage{tikz}
\usepackage{tikz-cd}
\usetikzlibrary{matrix,arrows}
\usepackage[mathscr]{euscript}
\usepackage{xcolor}
\usetikzlibrary{arrows.meta,calc,positioning}

\def\Cbb{\mathbb{C}} 
 
\def\Gbb{\mathbb{G}}

 \def\Pbb{\mathbb{P}}
\def\Qbb{\mathbb{Q}}

 \def\Zbb{\mathbb{Z}}

\def\Ecal{\mathcal{E}}

\newtheorem{thm}{Theorem}[section]
\newtheorem{lem}[thm]{Lemma}

\newtheorem{defi}[thm]{Definition}
\newtheorem{cor}[thm]{Corollary}

\begin{document}
\author{Yugang Zhang}
\address{Yugang Zhang, Université Bourgogne Europe, CNRS, IMB UMR 5584, 21000 Dijon, France}
\email{yugang.zhang@ube.fr}
\urladdr{https://sites.google.com/view/yugangzhang}

\thanks{The author acknowledges support by the programme ATRACT of Région Bourgogne-Franche-Comté under the project ADYAUS (RECH-ATRAC-000012) and the French National Research Agency under the project DynAtrois (ANR-24-CE40-1163)}

\title{Special regular polynomial skew products}
\maketitle

\begin{abstract}
We define a regular polynomial skew product $(p(z),q(z,w))$ of $\mathbb{C}^2$ of degree $d\geq 2$ to be special if it is triangularly conjugate to a map of the form $(p(z),q(w))$, where $p$ and $q$ are power maps or $\pm$Chebyshev maps, or of the form $(z^d,D_d(w,\zeta z^m))$, where $\zeta^{d-1}=1$, $m\in\{1,2\}$, and $D_d$ is the Dickson polynomial of degree $d$. We justify this definition by showing the following equivalence.
\begin{enumerate}
    \item $f$ is special.
    \item $f$ is semiconjugate to an affine self-map $g$ in skew product form of a 2-dimensional connected and commutative algebraic group $G$ over $\mathbb{C}$.
    \item All multipliers of $f$ are contained in a fixed number field $K$.
\end{enumerate}
This generalizes the one-variable polynomial case.
\end{abstract}

\section{Introduction}
\subsection{Special rational maps}
Following the standard classification, a rational map of a Riemann sphere special is usually called \emph{special} if it is semiconjugate to an affine map of a connected commutative algebraic group $G$. More precisely, if $G=\Gbb_m$ is an algebraic torus, then $f$ is conjugate to the power map $z\mapsto z^{\pm d}$ or to $\pm T_d$ (where $T_d$ is the \emph{Chebyshev polynomial} of degree $d$, characterized by $T_d\left(z+\frac{1}{z}\right)=z^d+\frac{1}{z^d}$). If $G$ is an elliptic curve, then $f$ is a \emph{Latt\`es} map by definition. A Latt\`es map is called \emph{flexible} if the associated affine map of the elliptic curve is multiplication by an integer; see Lemma~\ref{lem:onedim-affine-semi}.

Special maps play an important role in complex dynamics since many rigidity phenomena hold only either for special maps or for non special maps. For example, if we denote by $\mu_f$ the unique maximal entropy invariant measure of $f$ and $\alpha$ its Hausdorff dimension, then Zdunik's rigidity theorem~\cite{Zdunik1990} states that $\mu_f$ is absolutely continuous with respect to the $\alpha$-Hausdorff measure on the Julia set if and only if $f$ is special. 

Another rigidity property that we would like to generalize in this paper concerns the field of definition of the multipliers of $f.$ A point $z\in \Pbb^1_\Cbb$ is \emph{periodic of exact period} $n\geq 1$ if $f^n(z)=z$ and $n$ is the smallest positive integer with this property, where $f^n=f\circ\cdots\circ f$ denotes the $n$-fold iterate of $f$. The multiplier of $z$ is the eigenvalue of the differential $Df^n(z)$ of $f^n$ at $z$. It describes the first-order asymptotic behavior of the dynamical system near $z$ and plays an important role in complex dynamics. McMullen~\cite[Corollary 2.3]{mcmullen87} proved that, aside from flexible Latt\`es maps, the set of all multipliers determines the conjugacy class of $f$ up to finitely many possibilities. Ji--Xie~\cite[Theorem 1.3]{JiXie2025MultiplierSpectrum} later generalized this result and showed that, generically in the Zariski topology, it in fact determines the conjugacy class of $f$ uniquely. See also Huguin~\cite[Theorem C]{huguin2024modulispacespolynomialmaps} and Ji--Xie~\cite[Theorem 1.4]{JiXie2025MultiplierSpectrum} for the polynomial case.

A special rational map has all its multipliers contained in the ring of integers of an imaginary quadratic field (see~\cite{milnor06}). Milnor~\cite{milnor06} conjectured that the converse also holds, and this was first proved by Ji--Xie~\cite[Theorem 1.13]{JiXiePi}. See also Huguin~\cite[Theorem 9]{Huguinquadratic} for the case of quadratic rational maps, and also Buff--Gauthier--Huguin--Raissy~\cite[Theorem 2]{arXiv:2212.03661} for entire maps of $\Cbb$.

Huguin then proved a stronger rigidity result that we will use later.
\begin{thm}[Huguin {\cite[Theorem 7]{Huguinpolytechnique}}]~\label{thm: Huguin}
    A rational map whose multipliers are all contained in a fixed number field is special.
\end{thm}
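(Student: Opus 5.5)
The plan is to reduce the statement to the Ji--Xie resolution of Milnor's conjecture (\cite[Theorem 1.13]{JiXiePi}): a rational map all of whose multipliers lie in the ring of integers of a single imaginary quadratic field is special. So, given $f$ of degree $d\geq 2$ whose multipliers all lie in a number field $K$, I will try to upgrade this hypothesis in two steps. First, show that $f$ may be assumed to be defined over a number field. Second, show that the multipliers are in fact algebraic integers lying in an imaginary quadratic field. Throughout I argue by contradiction, assuming $f$ is not special, and I set aside flexible Latt\`es maps, which are special anyway.

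\textbf{Step 1: reduce to $f$ defined over $\overline{\Qbb}$.} By McMullen's rigidity theorem \cite[Corollary 2.3]{mcmullen87}, the multiplier spectrum of a non-flexible-Latt\`es map determines its conjugacy class up to finitely many possibilities. Consider the locus in moduli space of maps whose multipliers of period $n$ coincide with those of $f$, for all $n$. This locus is cut out by equations with coefficients in $K$, and by McMullen it is finite. Hence it consists of $\overline{\Qbb}$-points, so $f$ is conjugate to a map defined over a number field $L\supseteq K$. Every Galois conjugate $f^\sigma$ has multiplier spectrum $\sigma(\text{spectrum of }f)\subset\sigma(K)$.

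\textbf{Step 2: global height argument.} For each place $v$ of $L$ I consider the $v$-adic Lyapunov exponent $L_v(f)=\int\log|f'|_v\,d\mu_{f,v}$ with respect to the canonical measure on the Berkovich line. I then aim for three facts.
\begin{enumerate}
\item[(a)] At each place $v$, the averages $\frac{1}{nd^n}\sum_{f^n(z)=z}\log^+|(f^n)'(z)|_v$ converge to an expression controlled by $L_v(f)$. This should follow from equidistribution of periodic points, and is known archimedeanly by Ljubich and Przytycki, and non-archimedeanly by Favre--Rivera-Letelier and Okuyama.
\item[(b)] At places of good reduction $L_v(f)=0$.
\item[(c)] $L_\infty(f^\sigma)\geq\frac12\log d$ at every archimedean place, with equality only for special maps (Zdunik-type rigidity \cite{Zdunik1990}).
\end{enumerate}
Because every multiplier lies in the fixed field $K$, its Weil height is at most $[K:\Qbb]$ times the average of $\log^+|\cdot|_v$ over places. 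Comparing this with the lower bound from (c), which holds for every Galois conjugate, should force the multipliers to have bounded denominators. I would then show they are actually integral by repeating the argument with $f^k$ and letting $k\to\infty$. Finally, (c) together with the product formula $\sum_v n_v\log|\lambda|_v=0$ should force all archimedean absolute values of a multiplier to coincide. Given integrality, this places the multipliers in $\Qbb$ or in an imaginary quadratic field; a real field is excluded by a similar argument applied to $|\lambda|$ versus $|\sigma\lambda|$. At that point \cite[Theorem 1.13]{JiXiePi} applies.

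\textbf{Main obstacle.} I expect the crux to be turning the equidistribution statements in (a) into an exact identity strong enough to contradict non-specialness. The $\log$ of the multiplier is not a continuous test function, since it is unbounded near critical points, so convergence of the averages needs error control uniform in $n$. The bad-reduction places are also delicate, since there $L_v(f)$ may be positive. My first attempt would be to use the Ji--Xie dynamical Galois-type argument: work on the multiplier spectrum as a whole, and use that a non-special map has Zariski-dense families of cycles whose multipliers are multiplicatively independent. This independence is incompatible with all multipliers lying in the finitely generated unit-type group that the height bounds above produce.
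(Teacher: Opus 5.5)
The paper does not prove this statement: it quotes it from Huguin and uses it as a black box in (3)$\Rightarrow$(1), so your proposal has to stand on its own.

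Step 1 is sound. For $f$ not flexible Latt\`es, McMullen's theorem makes the fiber of the multiplier-spectrum map through $[f]$ a zero-dimensional subvariety of moduli space defined over $K$, so $f$ has a model over $\overline{\Qbb}$.

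The gap is Step 2. It carries the whole content of the theorem and contains no working mechanism.

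\textbf{The rigidity in (c) cannot detect non-specialness.} Equality $L(f)=\frac12\log d$ occurs only for Latt\`es maps. But $z^d$ and $\pm T_d$ have $L=\log d$, exactly like many non-special maps: every polynomial with connected Julia set has $L=\log d$, by Przytycki's formula $L(P)=\log d+\sum_c G_P(c)$.

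\textbf{The fixed field $K$ is never used.} Statements (a)--(c) hold for every rational map over a number field. $K$ enters your argument only through writing $h(\lambda)$ as a sum of local terms, which is valid for any algebraic number and constrains nothing. So Step 2 cannot distinguish your $f$ from an arbitrary map over $\overline{\Qbb}$. It therefore cannot yield bounded denominators, integrality, or equality of the $|\sigma(\lambda)|$. For instance, $(z^2-z)/p$ with $p$ odd is a non-special map over $\Qbb$ whose finite $n$-cycles all have $|\lambda|_p=p^n$, and it satisfies (a)--(c).

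\textbf{Averages cannot control individual multipliers.} Statement (a) only controls averages over all $n$-periodic points, which say nothing about a single multiplier. Also, a lower bound on archimedean averages points the wrong way for bounding non-archimedean denominators. (Minor: at good-reduction places $L_v(f)=0$ only when the reduction is separable.)

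What is missing is an argument that exploits the hypothesis cycle by cycle and at all places of $K$ at once. Your last paragraph points that way. However, nothing earlier in the argument produces the ``finitely generated unit-type group'' containing the multipliers, so multiplicative independence of cycle multipliers has nothing to contradict.

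The genuine difficulty is that once you only know $\lambda\in K$, the multipliers no longer lie in a discrete subset of $\Cbb$. Neither $K$, nor $\mathcal{O}_K$ unless $K$ is $\Qbb$ or imaginary quadratic, is discrete under an embedding. So the discreteness that makes the imaginary quadratic case accessible to Ji--Xie's homoclinic-orbit argument is lost. Galois conjugation does not help either, since it gives no control on where the conjugate cycles of $f^\sigma$ lie.

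As written, your reduction to \cite[Theorem 1.13]{JiXiePi} presupposes an intermediate statement: that every multiplier of a putative non-special map lies in the integers of an imaginary quadratic field. That statement is essentially as strong as the theorem itself.
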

Ji-Xie-Zhang~\cite[Theorem 1.5]{JXZAnn26} strengthened Theorem~\ref{thm: Huguin} by requiring only that, for any multiplier $\lambda$, some power of $|\lambda|$ lies in a fixed number field.

\subsection{Special regular polynomial skew products}
A \emph{(regular polynomial) skew product} of degree $d\geq 2$ is a polynomial map $f$ of $\Cbb^2$ of the form
\begin{align}
    f(z,w)=(p(z),q(z,w))
\end{align}
that extends holomorphically to all of $\Pbb^2_\Cbb$, where $p$ and $q$ are polynomials of degree $d$. We often write $q_z(w)$ for $q(z,w)$. This class of maps was studied in detail by Jonsson~\cite{Jonsson} and exhibits interesting features that do not occur for rational maps or for general polynomial endomorphisms of $\Pbb^2_\Cbb$, see e.g.,~\cite{Wandering16,Dujardin16,Dujardin17,Taflin21,ab1,ab2,tapiero}.

For a point $(z,w)\in \Cbb^2$, the iterate $f^n(z,w)$ has the form $f^n(z,w)=\bigl(p^n(z),Q_z^n(w)\bigr),$
where $Q_z^n(w):=q_{p^{n-1}(z)}\circ\cdots\circ q_z(w).$
Therefore the differential of $f^n$ at $(z,w)$ is
\begin{align*}
D f^n(z,w)=
\begin{pmatrix}
(p^n)'(z) & 0\\
\partial_z Q_z^n(w) & \partial_w Q_z^n(w)
\end{pmatrix}.
\end{align*}
In particular, if $(z,w)$ is a periodic point of period $n$, then its \emph{multipliers}, which by definition are the eigenvalues of 
$D f^n(z,w)$, are $(p^n)'(z)$ and $\partial_w Q_z^n(w).$ We say that a skew product is \emph{triangularly conjugate} to another skew product if the conjugation is an affine automorphism of the form $(a(z),b(z,w))$.

To state our main result, we recall the definition of a Dickson polynomial.
It was introduced by Dickson~\cite{Dickson} and has been studied mostly over finite fields; see the book~\cite{bookdickson}. The \emph{Dickson polynomial of degree} $d$ is the unique polynomial $D_d(X,A)\in \Zbb[X,A]$ such that
\begin{align*}
D_d\left(t+\frac{a}{t},a\right)=t^d+\frac{a^d}{t^d}.
\end{align*}
It generalizes both power maps and Chebyshev maps. Indeed, $D_d(t,0)=t^d$ and $D_d(t,1)=T_d.$
One can show that, for example, $D_2(x,a)=x^2-2a$ and $D_3(x,a)=x^3-3ax$.

Set
\begin{align}
    \Ecal_d := \{z\mapsto z^d , \pm T_d\}.
\end{align}
Then a one-variable polynomial $p$ is special if and only if $p$ is conjugate to a map in $\Ecal_d$.

We now state our definition of special skew product maps.
\begin{defi}\normalfont
    A skew product is \emph{special} if it is triangularly conjugate to one of the following maps
\begin{align}
&(P(z),Q(w)), \quad \text{where } P,Q\in\Ecal_d
\tag{$\dagger1$}\label{eq:tagger1}.\\
&f_{2,m}(z,w)=\Bigl(z^d,D_d\bigl(w,\zeta z^m\bigr)\Bigr),
\quad \text{where } \zeta^{d-1}=1 \text{and } m\in\{1,2\}
\tag{$\dagger2$}\label{eq:tagger2}.
\end{align}
\end{defi}
The case ~\eqref{eq:tagger2} is ``nontrivial''. For instance, if $p(z)=z^d$ and $\zeta=1$, then over the fixed fiber $z=0$ we obtain the power map $D_d(w,0)=w^d,$
whereas over the fixed fiber $z=1$ we obtain the Chebyshev map $D_d(w,1)=T_d.$ 

In particular, \eqref{eq:tagger2} is not conjugate to~\eqref{eq:tagger1}. 
Nevertheless, one can remark that \eqref{eq:tagger2} is semiconjugate to $\big(z^d,\pm T_d(w) \big)$. In fact, if we set $\Pi_m(u,v)=(u^2, u^mv)$, then $f_{2,m}\circ\Pi_m=\Pi_m \circ g$, where $g(u,v)=\big(u^d, D_d(v,\zeta) \big)$ is affinely conjugate to $\big(u^d,\pm T_d(v) \big)$ by Lemma~\ref{lem:dickson-conjugacy}.

\subsection{Semiconjugacy}
In this text, algebraic groups are connected and commutative by definition. The following digram would be helpful to the reader to understand the following definitions. In particular, Definition~\ref{def: semiconjugate} means that the diagram is commutative.
\begin{figure}[ht]
\centering
\begin{tikzpicture}[
    scale=0.7,
    transform shape,
    >=Latex,
    line width=0.9pt,
    font=\normalsize,
    map/.style={->, draw=black},
    vmap/.style={->, draw=red, dashed},
    obj/.style={
        text=black,
        fill=white,
        inner sep=2pt,
        outer sep=1pt
    },
    lab/.style={
        fill=white,
        text=black,
        inner sep=1.5pt
    },
    vlab/.style={
        fill=white,
        text=red,
        inner sep=1.5pt
    }
]

\node[obj] (K)  at (-3,4) {$K=\ker(\rho)$};
\node[obj] (G1) at (0,4) {$G$};
\node[obj] (H1) at (5,4) {$H$};

\node[obj] (G2) at (-2,2) {$G$};
\node[obj] (H2) at (3,2) {$H$};

\node[obj] (C1) at (0,0) {$\mathbb{C}^2$};
\node[obj] (P1) at (5,0) {$\mathbb{C}\subset \mathbb{P}^1$};

\node[obj] (C2) at (-2,-2) {$\mathbb{C}^2$};
\node[obj] (P2) at (3,-2) {$\mathbb{C}\subset \mathbb{P}^1$};

\draw[map] (K) -- (G1);
\draw[map] (G1) -- node[lab, above, pos=.52] {$\rho$} (H1);

\draw[map] (G2) -- node[lab, above, pos=.62, yshift=3pt] {$\rho$} (H2);

\draw[map] (C1) -- node[lab, above, pos=.52] {$\pi$} (P1);
\draw[map] (C2) -- node[lab, above, pos=.52] {$\pi$} (P2);

\draw[vmap] (G1) -- node[vlab, left, pos=.50, xshift=-3pt] {$\Pi$} (C1);
\draw[vmap] (H1) -- node[vlab, right, pos=.50, xshift=3pt] {$\alpha$} (P1);

\draw[vmap] (G2) -- node[vlab, left, pos=.50, xshift=-3pt] {$\Pi$} (C2);
\draw[vmap] (H2) -- node[vlab, right, pos=.50, xshift=3pt] {$\alpha$} (P2);

\draw[map] (G1) -- node[lab, above left] {$g$} (G2);
\draw[map] (H1) -- node[lab, above left] {$\bar g$} (H2);

\draw[map] (C1) -- node[lab, above left] {$f$} (C2);
\draw[map] (P1) -- node[lab, above left] {$p$} (P2);

\end{tikzpicture}
\end{figure}

\begin{defi}\label{def:skew}\normalfont
Let $G$ be an algebraic group over $\Cbb$, let
$\rho:G\twoheadrightarrow H$ be a surjective algebraic group homomorphism
to a $1$-dimensional algebraic group $H$.
\begin{enumerate}
\item A dominant rational map $\Pi:G\dashrightarrow \Cbb^2$ is said to be
in \emph{skew-product form} (with respect to $\rho$) if there exist a
nonconstant rational map $\alpha:H\dashrightarrow \Pbb^1$ and a rational
map $\beta:G\dashrightarrow \Pbb^1$ such that
\begin{align*}
\Pi=(\alpha\circ\rho,\beta).
\end{align*}
\item An \emph{affine self-map} of $G$ is a surjective map of the form
\begin{align*}
g=t\circ g_0,
\end{align*}
where $g_0:G\to G$ is an algebraic group endomorphism and $t\in G$.
We say that $g$ is in \emph{skew-product form} (with respect to $\rho$)
if there exists an affine self-map $\bar g:H\to H$ such that
\begin{align*}
\rho\circ g = \bar g\circ \rho.
\end{align*}
This means that the top face of the cubic diagram is commutative (and $g$ sends fibers of $\rho$ to fibers of $\rho$).
\end{enumerate}
\end{defi}

\begin{defi}\label{def: semiconjugate}\normalfont
Let $f$ be a skew product of $\Cbb^2$ of degree $d\ge 2$.
We say that $f$ is \emph{semiconjugate} to an affine self-map $g$ in skew product form of a $2$-dimensional algebraic group $G$ if there exists a dominant generically finite rational map 
$\Pi:G\dashrightarrow\Cbb^2$ in skew product form such that
\begin{align}\label{eq: semiconjug}
    f\circ\Pi = \Pi\circ g.
\end{align}
\end{defi}
This means that the (left face of) cubic diagram is commutative. Here $\pi$ is the first-coordinate projection.
\subsection{Main results}

\begin{thm}\label{thm: main}
    Let $f$ be a regular polynomial skew product of $\Cbb^2$ of degree $d\ge 2$. Then the following assertions are equivalent.
    \begin{enumerate}
        \item $f$ is special.
        \item $f$ is semiconjugate to an affine self-map $g$ in skew product form of a 2-dimensional algebraic group $G$.
        \item All multipliers of $f$ are contained in a fixed number field $K$.
    \end{enumerate}
\end{thm}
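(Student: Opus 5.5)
I will prove the cycle of implications (1)$\Rightarrow$(2)$\Rightarrow$(3)$\Rightarrow$(1).

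\textbf{(1)$\Rightarrow$(2).} Semiconjugacy and the skew-product forms are preserved under triangular affine conjugation, so it suffices to treat the normal forms. For \eqref{eq:tagger1}, take $G=\Gbb_m^2$ and $\rho$ the first projection. Let $g(u,v)=(u^{d},v^{d})$, with signs or translations by $-1$ for $-T_d$, and let $\Pi(u,v)=(\alpha(u),\beta(v))$, where $\alpha,\beta$ are the identity or $t\mapsto t+t^{-1}$. Then $g$ is affine in skew-product form, $\Pi$ is in skew-product form, and $f\circ\Pi=\Pi\circ g$ follows coordinatewise from the one-variable case. For \eqref{eq:tagger2}, the remark after the definition gives $f_{2,m}\circ\Pi_m=\Pi_m\circ g$ with $g(u,v)=(u^d,D_d(v,\zeta))$, and by Lemma~\ref{lem:dickson-conjugacy} this $g$ is affinely conjugate to $(u^d,\pm T_d(v))$. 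Composing with the $\Gbb_m^2$ semiconjugacy of the previous case gives a generically finite $\Pi:\Gbb_m^2\dashrightarrow\Cbb^2$, namely
\[
(u,v)\mapsto\bigl(u^2,\,u^m(c v+c' v^{-1})\bigr)
\]
up to affine change. It has the form $(\alpha\circ\rho,\beta)$ with $\alpha(u)=u^2$.

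\textbf{(2)$\Rightarrow$(3).} Let $(z,w)$ be periodic of period $n$ for $f$, chosen outside the proper subvariety where $\Pi$ is not a local biholomorphism; this subvariety is $f$-backward-controlled. Since $\Pi$ is dominant and generically finite, there are $x\in G$ and $k\ge1$ with $\Pi(x)=(z,w)$ and $g^{nk}(x)=x$. Then $Df^{nk}$ at $(z,w)$ is conjugate to $Dg^{nk}(x)$, which equals the linear part of $g_0^{nk}$ acting on $\mathrm{Lie}(G)$. That linear part is an endomorphism of a fixed 2-dimensional lattice or character group, so its eigenvalues lie in a fixed number field $K_0$ of degree at most $4$.

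Taking $k$-th roots introduces a problem. I will avoid it by noting that the multipliers of $f^{n}$ are themselves eigenvalues of $Dg^{n}$ at a periodic point of $g$ of period dividing $nk'$, where $k'$ is bounded by $\deg\Pi$. Alternatively, I can compute them directly from $g_0$ restricted to the $\rho$-fibres and to $H$, since $p$ is semiconjugate to $\bar g$ via $\alpha$. The multipliers at the remaining exceptional periodic points lie in a proper subvariety and are handled by induction using the one-dimensional structure: the base multipliers come from $p$ and the fibre multipliers from the restriction to invariant curves. This yields a common number field $K$.

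\textbf{(3)$\Rightarrow$(1).} This is the main step.
\begin{enumerate}
\item Base multipliers of $f$ are multipliers of $p$, so Theorem~\ref{thm: Huguin} makes $p$ special. After triangular conjugation, $p\in\Ecal_d$.
\item For each periodic point $z_0$ of $p$ of period $n$, the fibre map $Q^n_{z_0}$ is a one-variable polynomial whose multipliers are fibre multipliers of $f$, hence lie in $K$. By Theorem~\ref{thm: Huguin} each $Q^n_{z_0}$ is special, i.e.\ conjugate to a power or $\pm$Chebyshev map.
\item Periodic points are Zariski dense in $\Cbb$. Specialness of a polynomial $Q$ of degree $d^n$ forces algebraic identities on its coefficients, for example that all critical points but at most two lie in a postcritically finite configuration with prescribed multiplicities. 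I would use this to show that the family $q_z$ has constant combinatorics: either each $q_z$ is generically conjugate to a power map or each is conjugate to a Chebyshev map.
\item Write $q_z(w)=a(z)w^d+\dots$ and normalize by a triangular conjugation $w\mapsto b_1(z)w+b_0(z)$. This should reduce to $q_z(w)=w^d+c(z)$ in the power case and to $D_d(w,h(z))$ in the Chebyshev/Dickson case, with polynomial $c$ and $h$.
\item The functional equation compatible with $p$ then forces $c\equiv0$ or $c$ constant, and $h(p(z))=h(z)^d$ up to roots of unity. For $p=z^d$ this gives $h=\zeta z^m$ with $\zeta^{d-1}=1$. Regularity of the extension to $\Pbb^2$ bounds $\deg h$ so that $m\le2$. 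For $p=\pm T_d$, $h$ must be constant, which gives \eqref{eq:tagger1}.
\end{enumerate}

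The main obstacle is step (3)--(4): passing from the specialness of each individual fibre return map $Q^n_{z_0}$ to a global triangular normal form. The fibre maps $q_z$ at non-periodic $z$ need not be special individually. My first attempt is to use rigidity of the critical structure: specialness of all $Q^n_{z_0}$ implies that the fibrewise critical values satisfy algebraic relations on a Zariski dense set, hence identically. From there I would reconstruct the Dickson parametrization.
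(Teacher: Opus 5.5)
There is a genuine gap, at the point you flag yourself: steps 3--4 of (3)$\Rightarrow$(1). Your (1)$\Rightarrow$(2) is fine and matches the paper's remark.

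\textbf{The gap in (3)$\Rightarrow$(1).} Specialness of $Q^n_{z_0}=q_{z_{n-1}}\circ\cdots\circ q_{z_0}$ is a condition on an $n$-fold composition whose length depends on $z_0$. It therefore gives no single algebraic condition on the coefficients of $q_z$ holding on a Zariski-dense set of $z$, and ``critical structure'' does not supply one. Worse, the dichotomy you aim for is false on the nontrivial family $f_{2,m}$. There $q_0=w^d$, but for generic $z$ the map $q_z=D_d(w,\zeta z^m)$ is conjugate to neither model:
\begin{itemize}
\item not to $w^d$, since a monic centered conjugate of $w^d$ is $w^d$ itself, whereas $[w^{d-2}]D_d(w,a)=-da\neq 0$;
\item not to $\pm T_d$, by Lemma~\ref{lem:dickson-conjugacy}, since $(\zeta z^m)^{d-1}=z^{m(d-1)}\neq 1$.
\end{itemize}

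\textbf{The missing idea.} It is Ritt-type rigidity of decompositions (Corollary~\ref{cor: ritt}, from Zieve--M\"uller). The maps $w^{d^n}$ and $T_{d^n}$ are $n$-fold compositions of $w^d$ and $T_d$. Any decomposition of a conjugate of them into $n$ monic degree-$d$ factors differs from this one only by affine insertions. This gives Lemma~\ref{lem: decomposition}: along the whole orbit, $q_{z_j}(w)=D_d(w-c_j,\ell_j)+c_{j+1}$ with $\ell_{j+1}=\ell_j^d$. This is a statement about each individual fibre map, and that is what makes Zariski density usable. Set $c(z)=-\frac1d[w^{d-1}]q(z,w)$, conjugate by $(z,w+c(z))$, and set $\varphi(z)=-\frac1d[u^{d-2}]\widetilde q(z,u)$. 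One then gets $\widetilde q(z,u)\equiv D_d(u,\varphi(z))$ and the exact equation $\varphi\circ p=\varphi^d$, not merely ``up to roots of unity''.

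Your step 5 then agrees with the paper for $p=z^d$. For $p=\pm T_d$ you still owe the proof that $\varphi$ is constant. The paper's route is: conjugate $p$ to $D_d(\cdot,\nu)$, lift via $t\mapsto t+\nu/t$ to $u(t^d)=u(t)^d$ for a Laurent polynomial $u$, and use the symmetry $t\mapsto \nu/t$.

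\textbf{The gap in (2)$\Rightarrow$(3).} This step is also incomplete, for two reasons.
\begin{itemize}
\item Saying the linear part of $g_0$ acts on ``a fixed lattice or character group'' presupposes you already know $G$. A priori $G$ may contain a $\Gbb_a$ factor, whose endomorphisms are arbitrary complex scalars, so no number field comes out. You need Lemma~\ref{lem:onedim-affine-semi}, applied to $\bar g$ on $H$ and to the return maps on fibres of $\rho$, to force $H\simeq K\simeq\Gbb_m$.
\item The exceptional periodic points are not handled. For $(z^d,w^d)$ with $\Pi$ the inclusion $\Gbb_m^2\subset\Cbb^2$, the periodic points on the axes are not in the image of $\Pi$ at all. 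More generally, for a periodic $z_0$ outside $\alpha(H\setminus E)$, nothing in your argument controls the multipliers of $Q^n_{z_0}$. ``Induction using invariant curves'' is not a proof.
\end{itemize}

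The paper avoids this by not proving (2)$\Rightarrow$(3) directly. It proves (2)$\Rightarrow$(1): Lemma~\ref{lem:onedim-affine-semi} gives $p$ special and $Q^N_{z_0}$ special for infinitely many $z_0$, and then Theorem~\ref{thm: main'} applies. It proves (1)$\Rightarrow$(3) from the global formula $Q^n_z=D_{d^n}(w,\varphi(z))$. Combined with $\varphi(z)=0$ or $\varphi(z)^{d^n-1}=1$ at every $p$-periodic $z$, this shows every fibre return map is special and hence has rational multipliers.
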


The key point to prove Theorem~\ref{thm: main} is Theorem~\ref{thm: main'} which states that if $p$ is special and if there exist infinitely many periodic points such that the first return maps $Q^n_{z_0}$ is special, then $f$ is itself a special skew product. To do so, we prove a decomposition lemma \emph{\`a la} Ritt (Lemma~\ref{lem: decomposition}), which reduces the determination of $q$ from its non-autonomous iteration $Q^n_{z_0}$ to the functional equation~\eqref{eq:heart}. Finally, we can show that if we suppose (2) or (3) in Theorem~\ref{thm: main}, then we can reduce to the case of Theorem~\ref{thm: main'} by applying respectively Lemma~\ref{lem:onedim-affine-semi} and Theorem~\ref{thm: Huguin}.

Finally, note that for invertible polynomial automorphisms of $\Cbb^2$, Cantat and Dujardin considered the question of the field of definitions of multipliers for H\'enon maps in~\cite[Theorem D]{cantat2024rigidityresultspolynomialautomorphisms}.

\section*{Acknowledgments} The author would like to thank Thomas Gauthier for helpful discussions.

\section{Dickson polynomial and decomposition lemma}
Our main result in this section is the decomposition Lemma~\ref{lem: decomposition}. 
\subsection{Dickson polynomial}
In this subsection we introduce Dickson polynomials and present some properties. They have been studied mostly over finite fields. 
One can find most of the proofs in this subsection in~\cite{bookdickson} over an arbitrary ring. For the convenience of the reader, we reproduce them here.

By the fundamental theorem of symmetric polynomials, for each integer $d\ge 0$
there exists a unique polynomial $D_d(X,A)\in \Zbb[X,A]$ such that
\begin{align*}
u^d+v^d=D_d(u+v,uv).
\end{align*}
Substituting $u=t$ and $v=a/t$ yields
\begin{align*}
D_d\!\left(t+\frac{a}{t},a\right)=t^d+\frac{a^d}{t^d}.
\end{align*}

\begin{defi}\normalfont
The polynomial $D_d$ is called the \emph{Dickson polynomial} of degree $d$.
\end{defi}

If we set $s_d:=u^d+v^d$, then
\begin{align*}
s_0=2,\qquad s_1=u+v, \qquad \text{and}\qquad s_{d+1}=(u+v)s_d-uv\,s_{d-1}.
\end{align*}
Therefore the Dickson polynomials satisfy
\begin{align*}
D_0(x,a)=2,\qquad D_1(x,a)=x,\qquad D_2(x,a)=x^2-2a,
\end{align*}
and more generally
\begin{align}\label{eq: recursive-dick}
    D_{d+1}(x,a)=xD_d(x,a)-aD_{d-1}(x,a).
\end{align}

\begin{lem}\label{lem:basic-dick}
For all integers $m\ge 1$ and $d\ge 2$, the Dickson polynomials satisfy:
\begin{enumerate}
\item $D_d(x,0)=x^d$ and $D_d(x,1)=T_d(x),$
where $T_d$ is the Chebyshev polynomial of degree $d$.
\item There exists a polynomial $R_d(x,a)\in \Cbb[x,a]$ such that
\begin{align*}
D_d(x,a)=x^d-dax^{d-2}+R_d(x,a),
\qquad
\deg_x R_d\le d-4.
\end{align*}
\item $D_m(D_d(x,a),a^d)=D_{md}(x,a).$
\item For every $\lambda\in\Cbb^\ast$, $D_d(\lambda x,\lambda^2 a)=\lambda^d D_d(x,a).$
\end{enumerate}
\end{lem}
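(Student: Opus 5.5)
The basic tool is the defining identity $D_d(t+a/t,a)=t^d+a^d/t^d$, read as an identity of Laurent polynomials in $t$ with coefficients in $\Cbb[a]$. Each item of Lemma~\ref{lem:basic-dick} follows from it by the same principle. If two polynomials in $(x,a)$ agree after the substitution $x=t+a/t$, then they are equal. Indeed, for fixed $a$ the map $t\mapsto t+a/t$ from $\Cbb^\ast$ to $\Cbb$ is surjective, so their difference vanishes on all of $\Cbb^2$.

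For item (1), we put $a=0$ in $u^d+v^d=D_d(u+v,uv)$ and take $v=0$, which gives $D_d(u,0)=u^d$. For $a=1$ the defining identity becomes $D_d(t+1/t,1)=t^d+t^{-d}$, and this is exactly the characterization of $T_d$ recalled in the introduction.

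For item (4), we substitute $t\mapsto\lambda t$. This gives
\[
D_d\bigl(\lambda t+\lambda^2a/(\lambda t),\lambda^2a\bigr)=\lambda^dt^d+\lambda^{2d}a^d/(\lambda^dt^d)=\lambda^d\bigl(t^d+a^d/t^d\bigr).
\]
The left side is $D_d(\lambda(t+a/t),\lambda^2a)$, and the right side is $\lambda^dD_d(t+a/t,a)$. The principle above then gives the identity.

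For item (3), we set $y=t^d$. Then $D_d(x,a)=y+a^d/y$ with $x=t+a/t$, so
\[
D_m\bigl(D_d(x,a),a^d\bigr)=D_m\bigl(y+a^d/y,a^d\bigr)=y^m+a^{dm}/y^m=t^{md}+a^{md}/t^{md}.
\]
The last expression is $D_{md}(x,a)$.

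Item (2) is the only step that needs real work. I would prove by induction on $d$, using the recursion~\eqref{eq: recursive-dick}, the stronger statement
\[
D_d(x,a)=x^d-d\,a\,x^{d-2}+c_d\,a^2x^{d-4}+(\text{terms of }x\text{-degree}\le d-6)
\]
for $d\ge 2$. It is cleaner to first note that $D_d$ is weighted homogeneous of degree $d$, with $x$ of weight $1$ and $a$ of weight $2$; this is item (4). Hence every monomial has the form $a^jx^{d-2j}$, and only $x$-degrees $\equiv d \pmod 2$ occur. So it is enough to compute the coefficients of $x^d$ and $x^{d-2}$.

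For the base cases, $D_1=x$ is consistent with the formula, since its coefficient of $ax^{-1}$ is $0$ rather than $-1$; the coefficient of $ax^{d-2}$ should therefore be handled as a recursion valid for $d\ge1$. Write $D_d=x^d+b_d\,a\,x^{d-2}+\cdots$. The recursion $D_{d+1}=xD_d-aD_{d-1}$ gives $b_{d+1}=b_d-1$ for $d\ge 2$, and with $b_2=-2$ this yields $b_d=-d$. The cases $d=2,3$ can be checked against $D_2=x^2-2a$ and $D_3=x^3-3ax$.

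Alternatively, item (2) follows directly by expanding $(t+a/t)^d$. With $x=t+a/t$,
\[
x^d-d\,a\,x^{d-2}=t^d+\binom d1 a t^{d-2}+\cdots-d\,a\,\bigl(t^{d-2}+\cdots\bigr),
\]
and the $t^{d-2}$ terms cancel. The main obstacle is only this bookkeeping of the leading and subleading coefficients, and weighted homogeneity makes it straightforward.
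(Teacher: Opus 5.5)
Your proposal is correct and follows essentially the paper's proof: items (1), (3), (4) come from substituting into the defining identity (the paper parametrizes by $x=u+v$, $a=uv$, you by $x=t+a/t$), and item (2) comes from induction on the recursion $D_{d+1}=xD_d-aD_{d-1}$, where deriving weighted homogeneity from (4) replaces the paper's explicit tracking of $\deg_x R_d$ through the induction. The only slip is that for $a=0$ the map $t\mapsto t+a/t$ on $\Cbb^\ast$ misses $0$; justify your equality principle instead by the Zariski density of the image $\Cbb^2\setminus\{(0,0)\}$ of $(t,a)\mapsto(t+a/t,a)$, or use the paper's map $(u,v)\mapsto(u+v,uv)$, which is onto $\Cbb^2$.
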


\begin{proof}
Write $x=u+v$ and $a=uv$. Then
\begin{align*}
D_d(x,a)=D_d(u+v,uv)=u^d+v^d.
\end{align*}

\smallskip

\noindent
\text{(1)} If $a=0$, then $uv=0$, hence $D_d(x,0)=u^d+v^d=(u+v)^d=x^d.$

If $a=1$, then $uv=1$, so $v=u^{-1}$, and thus $D_d(u+u^{-1},1)=u^d+u^{-d},$
which is the defining identity of the Chebyshev polynomial $T_d$.
Hence $D_d(x,1)=T_d(x).$

\smallskip
\noindent
\text{(2)} We prove by induction on $d$ that
\begin{align*}
D_d(x,a)=x^d-dax^{d-2}+R_d(x,a),
\qquad
\deg_x R_d\le d-4.
\end{align*}
For $d=2$ and $d=3$, we have $D_2(x,a)=x^2-2a$ and $D_3(x,a)=x^3-3ax$.
So the claim holds.

Assume now that $d\ge 3$ and that the claim holds for $d$ and $d-1$. Thus
\begin{align*}
D_d(x,a)&=x^d-dax^{d-2}+R_d(x,a),
\qquad \deg_x R_d\le d-4,\\
D_{d-1}(x,a)&=x^{d-1}-(d-1)ax^{d-3}+R_{d-1}(x,a),
\qquad \deg_x R_{d-1}\le d-5.
\end{align*}
Using the recurrence relation~\eqref{eq: recursive-dick}, we get
\begin{align*}
D_{d+1}(x,a)
&=xD_d(x,a)-aD_{d-1}(x,a)\\
&=x\bigl(x^d-dax^{d-2}+R_d(x,a)\bigr)
-a\bigl(x^{d-1}-(d-1)ax^{d-3}+R_{d-1}(x,a)\bigr)\\
&=x^{d+1}-(d+1)ax^{d-1}
+\Bigl(xR_d(x,a)+(d-1)a^2x^{d-3}-aR_{d-1}(x,a)\Bigr).
\end{align*}
The remainder term in parentheses has degree in $x$ at most $d-3$.
Hence
\begin{align*}
D_{d+1}(x,a)=x^{d+1}-(d+1)ax^{d-1}+R_{d+1}(x,a),
\qquad
\deg_x R_{d+1}\le d-3,
\end{align*}
which proves the claim.

\smallskip

\noindent
\text{(3)} Since $D_d(x,a)=u^d+v^d$ and $a^d=(uv)^d=u^dv^d$,
we have
\begin{align*}
D_m(D_d(x,a),a^d)&=D_m(u^d+v^d,u^dv^d)\\
&=u^{md}+v^{md}=D_{md}(x,a)
\end{align*}

\noindent
\text{(4)} We have $\lambda x=(\lambda u)+(\lambda v)$ and $\lambda^2 a=(\lambda u)(\lambda v),$
hence
\begin{align*}
D_d(\lambda x,\lambda^2 a)
&=D_d\bigl((\lambda u)+(\lambda v),(\lambda u)(\lambda v)\bigr)\\
&=(\lambda u)^d+(\lambda v)^d=\lambda^dD_d(x,a)
\end{align*}
\end{proof}

\begin{lem}~\label{lem: degm}
Let $\varphi\in \Cbb[z]$ be a polynomial of degree $m\ge 0$. Let $d\ge 2$. Assume that
\begin{align*}
\deg D_d(w,\varphi(z))=d
\end{align*}
as a polynomial in the two variables $w$ and $z$, with total degree. Then
\begin{align*}
\deg \varphi\in\{0,1,2\}.
\end{align*}
\end{lem}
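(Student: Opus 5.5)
We will read off the total degree of $D_d(w,\varphi(z))$ directly from the explicit expansion of a Dickson polynomial. The natural tool is the monomial expansion
\begin{align*}
D_d(x,a)=\sum_{j=0}^{\lfloor d/2\rfloor}\frac{d}{d-j}\binom{d-j}{j}(-a)^j x^{d-2j},
\end{align*}
valid for $d\ge 1$. It follows from the recurrence~\eqref{eq: recursive-dick} by induction, or from Waring's formula for $u^d+v^d$ in terms of $u+v$ and $uv$. The point we need is that every coefficient $c_j=\frac{d}{d-j}\binom{d-j}{j}$ with $0\le j\le\lfloor d/2\rfloor$ is a \emph{nonzero} positive rational number.

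Substituting $x=w$ and $a=\varphi(z)$ with $\deg\varphi=m$, the $j$-th term is $c_j(-1)^j\varphi(z)^j w^{d-2j}$. The leading homogeneous part of $\varphi(z)^j$ is $b^j z^{mj}$, where $b\neq 0$ is the leading coefficient of $\varphi$. So the $j$-th term has total degree exactly $mj+d-2j$ and contributes the monomial $c_j(-b)^j z^{mj}w^{d-2j}$ in that top degree. For different $j$ the exponent of $w$ differs, so these top monomials can never cancel one another. Hence
\begin{align*}
\deg D_d(w,\varphi(z))=\max_{0\le j\le \lfloor d/2\rfloor}\bigl(d+(m-2)j\bigr).
\end{align*}
If $m\ge 3$, we take $j=1$, which is allowed because $d\ge 2$. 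This gives degree at least $d+m-2>d$, contradicting the hypothesis. Therefore $m\le 2$, and conversely $m\in\{0,1,2\}$ gives degree exactly $d$ (the $j=0$ term).

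If we prefer to avoid the explicit formula, Lemma~\ref{lem:basic-dick}(2) already suffices. Writing $D_d(w,\varphi)=w^d-d\varphi w^{d-2}+R_d(w,\varphi)$, the term $-d\,\varphi(z)w^{d-2}$ has total degree $m+d-2$ with top monomial $-db\,z^mw^{d-2}$. We then have to check that no monomial $z^m w^{d-2}$ can come from $R_d$. Every monomial of $R_d(w,a)$ has the form $a^j w^{d-2j}$ with $j\ge 2$; this follows by induction from the recurrence, since $D_d$ is weighted homogeneous of degree $d$ when $x$ has weight $1$ and $a$ has weight $2$, as Lemma~\ref{lem:basic-dick}(4) shows. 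So the $w$-exponents appearing in $R_d$ are at most $d-4$, and the monomial $z^mw^{d-2}$ survives. Its degree $m+d-2$ must be at most $d$, which forces $m\le 2$.

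The only step needing care is this non-cancellation of the top-degree monomial in total degree. It is settled by grading by the $w$-exponent, using weighted homogeneity, and by the fact that the coefficient $d$ is nonzero in characteristic $0$.
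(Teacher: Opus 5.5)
Your proof is correct. Your second argument (isolating the monomial $-db\,z^m w^{d-2}$ from $-d\varphi(z)w^{d-2}$ and noting that nothing else has $w$-exponent $d-2$) is exactly the paper's proof; the weighted-homogeneity detour is unnecessary, since Lemma~\ref{lem:basic-dick}(2) already gives $\deg_w R_d(w,\varphi(z))\le d-4$. Your primary route via the closed-form expansion uses the same grading-by-$w$-exponent idea with all coefficients explicit, which buys the exact value $\deg D_d(w,\varphi(z))=\max_j\bigl(d+(m-2)j\bigr)$ and the converse for $m\le 2$, at the cost of quoting a formula the paper does not prove.
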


\begin{proof}
By Lemma~\ref{lem:basic-dick}(2), we have
\begin{align*}
D_d(x,a)=x^d-dax^{d-2}+R_d(x,a),
\qquad
\deg_x R_d\le d-4.
\end{align*}
Substituting $x=w$ and $a=\varphi(z)$, we obtain
\begin{align*}
D_d(w,\varphi(z))
=
w^d-d\,\varphi(z)\,w^{d-2}+R_d(w,\varphi(z)),
\end{align*}
where every term of $R_d(w,\varphi(z))$ has degree in $w$ at most $d-4$.

Suppose for contradiction that $m\ge 3$. Then the term $-d\,\varphi(z)\,w^{d-2}$
contains a nonzero monomial of the form
$c\,z^m w^{d-2},c\neq 0,$
whose total degree is $m+d-2>d.$
Moreover, this monomial cannot be canceled by any other term, because every
other term has degree in $w$ at most $d-4$. Hence
\begin{align*}
\deg D_d(w,\varphi(z))\ge m+d-2>d,
\end{align*}
contradicting the assumption.
\end{proof}

\begin{lem}\label{lem:dickson-conjugacy}
Let $d\ge 2$ and let $\zeta\in \Cbb$. Then $D_d(x,\zeta)$ is affinely conjugate to $\pm T_d$ if and only if $\zeta^{d-1}=1$.
\end{lem}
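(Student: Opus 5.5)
We argue directly by comparing coefficients, using only the normal form of Lemma~\ref{lem:basic-dick}(2) for one direction and the scaling identity of Lemma~\ref{lem:basic-dick}(4) for the other. Throughout, $T_d=D_d(\cdot,1)$ is monic of degree $d$ with no $x^{d-1}$ term and $x^{d-2}$-coefficient $-d$. The same holds for $D_d(x,\zeta)$ with $x^{d-2}$-coefficient $-d\zeta$. This is Lemma~\ref{lem:basic-dick}(2); since that lemma only gives $\deg_x R_d\le d-4$, it also covers $d=2,3$.

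\emph{Necessity.} Suppose $A(x)=ax+b$ with $a\neq 0$ and $\varepsilon\in\{\pm1\}$ satisfy
\begin{align*}
A\circ D_d(\cdot,\zeta)=\varepsilon T_d\circ A .
\end{align*}
We compare coefficients from the top down.
\begin{enumerate}
\item In degree $d$ we get $a=\varepsilon a^d$, i.e.\ $a^{d-1}=\varepsilon$.
\item In degree $d-1$ the left side has coefficient $0$. On the right, only $(ax+b)^d$ contributes, since $T_d$ has no $x^{d-1}$ term, giving $\varepsilon d a^{d-1}b$. Hence $b=0$.
\item With $b=0$, in degree $d-2$ the left side gives $-ad\zeta$ and the right side gives $-\varepsilon d a^{d-2}$. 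Hence $\zeta=\varepsilon a^{d-3}=a^{d-1}a^{d-3}=a^{2d-4}$.
\end{enumerate}
Using $a^{2(d-1)}=\varepsilon^2=1$, this gives $\zeta=a^{2d-4}=a^{-2}$. Therefore $\zeta^{d-1}=a^{-2(d-1)}=1$. This step also rules out $\zeta=0$, e.g.\ $x^d$ with $d=2$.

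For $d=2$ the "degree $d-2$" comparison is the constant term. It still works because $b=0$ was already forced by the degree $1$ comparison.

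\emph{Sufficiency.} Assume $\zeta^{d-1}=1$, so $\zeta\neq 0$. Choose $\lambda$ with $\lambda^2=\zeta$. By Lemma~\ref{lem:basic-dick}(4),
\begin{align*}
D_d(\lambda x,\zeta)=D_d(\lambda x,\lambda^2\cdot 1)=\lambda^d T_d(x).
\end{align*}
Conjugating by $h(x)=\lambda x$ therefore gives
\begin{align*}
h^{-1}\circ D_d(\cdot,\zeta)\circ h=\lambda^{d-1}T_d .
\end{align*}
Since $(\lambda^{d-1})^2=\zeta^{d-1}=1$, we have $\lambda^{d-1}=\pm1$. So $D_d(\cdot,\zeta)$ is affinely conjugate to $\pm T_d$.

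The only delicate point is the bookkeeping of the top three coefficients, in particular that the $x^{d-1}$ comparison forces $b=0$ before the $x^{d-2}$ comparison is used. Lemma~\ref{lem:basic-dick}(2) guarantees that no lower-order terms interfere with these three coefficients.
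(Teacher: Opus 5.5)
Your proof is correct and follows the paper's argument essentially step for step. Sufficiency uses the scaling $x\mapsto\lambda x$ with $\lambda^2=\zeta$ via Lemma~\ref{lem:basic-dick}(4), and necessity compares the coefficients of $x^d$, $x^{d-1}$, $x^{d-2}$ to obtain $a^{d-1}=\varepsilon$, $b=0$ and $\zeta=\varepsilon a^{d-3}=a^{-2}$, with your explicit treatment of $d=2$ (where $b=0$ is needed in the constant-term comparison) being a small point the paper leaves implicit.
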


\begin{proof}
Suppose $\zeta^{d-1}=1$.
Choose $\lambda\in\Cbb^*$ such that $\lambda^2 =\zeta$
and set $L(x)=\lambda x.$
By Lemma~\ref{lem:basic-dick}(1) and (4), we have
\begin{align*}
D_d(\lambda x,\zeta)=D_d(\lambda x,\lambda^2)=\lambda^dD_d(x,1)=\lambda^dT_d(x).
\end{align*}
Therefore $L^{-1}\circ D_d(\,\cdot\,,\zeta)\circ L(x)
=\lambda^{-1}D_d(\lambda x,\zeta)
=\lambda^{d-1}T_d(x).$
Since $(\lambda^{d-1})^2=\zeta^{d-1}=1$
we get $\lambda^{d-1}\in\{\pm1\}.$
Hence $D_d(x,\zeta)$ is affinely conjugate to $\pm T_d$.

Conversely, assume that $D_d(x,\zeta)$ is affinely conjugate to $\pm T_d$. Then there exist $\sigma\in\{\pm1\}$ and an affine map $L(x)=ux+v$
such that $D_d(x,\zeta)=L^{-1}\circ (\sigma T_d)\circ L(x).$
Equivalently,
\begin{align}\label{eq: EQ000}
    D_d(x,\zeta)=\frac{\sigma T_d(ux+v)-v}{u}.
\end{align}

Since both $D_d(x,\zeta)$ and $T_d(x)$ are monic of degree $d$, comparing the leading coefficients gives $\sigma u^{d-1}=1.$
Moreover, both $D_d(x,\zeta)$ and $T_d(x)$ have vanishing $x^{d-1}$-coefficient. The coefficient of $x^{d-1}$ in the right-hand side of ~\eqref{eq: EQ000} is $\sigma d u^{d-1}v/u = dv/u$,
hence $v=0$.

Therefore
\begin{align*}
D_d(x,\zeta)=\frac{\sigma T_d(ux)}{u}.
\end{align*}
Using Lemma~\ref{lem:basic-dick}(2), we have $D_d(x,\zeta)=x^d-d\zeta x^{d-2}+\cdots$ and
\begin{align*}
\frac{\sigma T_d(ux)}{u}
=
\sigma u^{d-1}x^d-d\sigma u^{d-3}x^{d-2}+\cdots
=
x^d-d\sigma u^{d-3}x^{d-2}+\cdots,
\end{align*}
so comparing the coefficient of $x^{d-2}$ yields $\zeta=\sigma u^{d-3}.$
Since $\sigma u^{d-1}=1$, we get $\zeta=u^{-2}.$
Hence
\begin{align*}
\zeta^{d-1}=u^{-2(d-1)}=(\sigma u^{d-1})^{-2}=1.
\end{align*}
This proves the converse.
\end{proof}

\subsection{Decomposition lemma}
In this subsection we prove Lemma~\ref{lem: decomposition}.
The following is a variant of Ritt's decomposition theorems proved by Zieve and M\"uller~\cite[Corollary 2.9]{arXiv:0807.3578}
\begin{lem}
    Suppose $a,b,c,d$ are non-constant polynomials such that $a\circ b=c\circ d$ and $\deg (a)=\deg(c)$, then there exists an affine map $A$ such that $a=c\circ A$ and $b=A^{-1}\circ d$.
\end{lem}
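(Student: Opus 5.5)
Since $a\circ b=c\circ d$ and $\deg a=\deg c$, comparing total degrees gives $\deg b=\deg d$. I will use the standard ``right factor'' comparison via Laurent/Puiseux expansions at infinity. Multiplying $a,c$ by constants does not affect the statement, so I first normalize. The idea is that $b$ and $d$ determine the same subfield $\Cbb(b)$ versus $\Cbb(d)$ of $\Cbb(x)$ up to the relation forced by the equation. Concretely, set $n=\deg a=\deg c$ and $k=\deg b=\deg d$.

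The plan is the classical Ritt argument. Write the leading coefficients $b=\beta x^k+\dots$ and $d=\delta x^k+\dots$. Near $x=\infty$, take a $k$-th root branch $\phi=b^{1/k}$ and $\psi=d^{1/k}$. Both are Laurent series in $x^{-1}$ starting with $\beta^{1/k}x$, respectively $\delta^{1/k}x$. From $a(b)=c(d)$ I extract $n$-th roots at infinity: $a(y)^{1/n}=\alpha^{1/n}y(1+O(1/y))$, a Puiseux series in $y^{-1}$ without fractional powers, since $a$ is a polynomial. Similarly for $c$. This yields $d=A^{-1}(b)+(\text{terms of order }x^{k-?})$. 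The key step is to show the expansion of $d$ as a power series in $b^{-1}$ (after choosing roots) is exactly affine.

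Precisely, define the series $S(y):=c^{-1}\circ a(y)$ at $y=\infty$, where $c^{-1}$ is the branch of the inverse near infinity. This is a Laurent series $S(y)=\lambda y+\mu+\sum_{j\ge1}s_jy^{-j}$, obtained as $c^{-1}(t)=\gamma^{-1/n}t^{1/n}+\dots$ composed with $a(y)^{1/n}$, which is a genuine Laurent series in $y^{-1}$. Then $d=S(b)$ for a suitable branch choice, because $c(d)=c(S(b))$ and both $d$ and $S(b)$ are branches solving $c(\cdot)=a(b)$ with the same leading behaviour up to an $n$-th root of unity (absorbed into the branch of $c^{-1}$). Now $d-\lambda b-\mu=\sum_{j\ge1}s_jb^{-j}$. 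The left side is a polynomial in $x$, while the right side tends to $0$ as $x\to\infty$. Hence both vanish, so $d=\lambda b+\mu=:A^{-1}(b)$ with $\lambda\ne0$, i.e.\ $b=A^{-1}\circ d$ after relabeling $A$ appropriately.

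Finally, from $a\circ b=c\circ d=c\circ A^{-1}\circ b$ and the fact that $b$ is nonconstant, hence takes infinitely many values, I get $a=c\circ A^{-1}$. Renaming $A^{-1}$ as $A$ gives $a=c\circ A$ and $b=A^{-1}\circ d$. The main obstacle is the careful justification of the branch choice: $S$ must be an honest Laurent series in $y^{-1}$, with no fractional exponents, and $d=S(b)$ must hold exactly rather than up to a root of unity. I would handle this by picking the $n$-th root in $c^{-1}$ to match the leading term of $d$ and invoking uniqueness of analytic solutions of $c(w)=t$ near $w=\infty$ with prescribed leading term. Alternatively, one can cite Zieve–Müller directly as the paper does.
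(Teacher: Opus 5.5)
Your argument is correct, and it takes a different route from the paper: the paper gives no proof and simply quotes Zieve--M\"uller [Corollary 2.9], whereas you prove the statement directly by expansions at infinity.

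\emph{How your proof works.} With $n=\deg a=\deg c$, the composite $S=c^{-1}\circ a$ has no fractional exponents because the two $n$-th roots cancel; this is exactly where $\deg a=\deg c$ is used. Hence $S(y)=\lambda y+\mu+O(y^{-1})$ with $\lambda^n=\alpha/\gamma\neq 0$. Then $d-\lambda b-\mu$ is a polynomial tending to $0$ at infinity, hence zero. Finally $a=c\circ(\lambda y+\mu)$ follows because $b$ takes infinitely many values.

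\emph{The branch issue.} The issue you flag is settled most cleanly in $\Cbb((x^{-1}))$. The polynomial $c(W)-a(b(x))$ in $W$ has degree $n$ and has the $n$ pairwise distinct roots $S_\omega(b(x))$, for $\omega^n=1$, with leading terms $\omega\lambda\beta x^k$. So these are all its roots, and $d$ must be one of them. Equivalently, your leading-coefficient matching works: the relation $\alpha\beta^n=\gamma\delta^n$ makes $\delta/\beta$ an admissible choice of $\lambda$.

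Two cosmetic points: the preliminary normalization is never used, and the $A$ versus $A^{-1}$ labels are briefly tangled. The final conclusion $a=c\circ A$, $b=A^{-1}\circ d$ with $A(y)=\lambda y+\mu$ is right.

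\emph{What each approach buys.} The citation is short and places the lemma inside general Ritt theory, which holds over more general fields. In that framework the lemma follows from L\"uroth's theorem together with the fact that, in the tame case, an intermediate field of $\Cbb(x)/\Cbb(a\circ b)$ is determined by its degree, because the inertia group at infinity is cyclic and transitive. Your proof is the analytic counterpart of that same ramification-at-infinity phenomenon. It is fully self-contained and elementary, and characteristic zero is all the paper needs.
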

By induction, we have the following.
\begin{cor}\label{cor: ritt}
Let $n\geq 1$ be an integer. Let $F_j, G_j, 0\le j \le n-1$ be non-constant polynomials such that
\begin{align*}
F_{n-1}\circ\cdots\circ F_0
 =G_{n-1}\circ\cdots\circ G_0
\end{align*}
and $\deg F_j=\deg G_j$, for all $0\le j\le n-1.$
Then there exist affine maps $A_1,\dots,A_{n-1}$ such that
\begin{align*}
F_0&=A_1^{-1}\circ G_0,\\
F_j&=A_{j+1}^{-1}\circ G_j\circ A_{j}\qquad (1\le j\le n-2),\\
F_{n-1}&=G_{n-1}\circ A_{n-1}.
\end{align*}
\end{cor}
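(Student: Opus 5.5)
We argue by induction on $n$, using the Zieve--M\"uller lemma stated just above as the only input. For $n=1$ the hypothesis reads $F_0=G_0$ and there are no affine maps to produce, so the statement is trivial. For $n=2$ we have $F_1\circ F_0=G_1\circ G_0$ with $\deg F_1=\deg G_1$. The lemma, applied with $a=F_1$, $b=F_0$, $c=G_1$, $d=G_0$, gives an affine map $A_1$ with $F_1=G_1\circ A_1$ and $F_0=A_1^{-1}\circ G_0$. This is exactly the claimed form.

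For the inductive step, assume the statement holds for $n-1\ge 2$ and suppose
\begin{align*}
F_{n-1}\circ(F_{n-2}\circ\cdots\circ F_0)=G_{n-1}\circ(G_{n-2}\circ\cdots\circ G_0).
\end{align*}
We group the composition as $a=F_{n-1}$, $b=F_{n-2}\circ\cdots\circ F_0$, $c=G_{n-1}$, $d=G_{n-2}\circ\cdots\circ G_0$. These are non-constant and $\deg a=\deg c$, so the lemma gives an affine $A_{n-1}$ with
\begin{align*}
F_{n-1}=G_{n-1}\circ A_{n-1},\qquad F_{n-2}\circ\cdots\circ F_0=A_{n-1}^{-1}\circ G_{n-2}\circ\cdots\circ G_0 .
\end{align*}
Now set $G'_{n-2}:=A_{n-1}^{-1}\circ G_{n-2}$ and $G'_j:=G_j$ for $j<n-2$. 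Composing with an affine map does not change degree, so $\deg F_j=\deg G'_j$ for all $0\le j\le n-2$. The inductive hypothesis, applied to the $n-1$ factors, yields affine maps $A_1,\dots,A_{n-2}$ such that
\begin{align*}
F_0&=A_1^{-1}\circ G_0,\\
F_j&=A_{j+1}^{-1}\circ G_j\circ A_j \qquad (1\le j\le n-3),\\
F_{n-2}&=G'_{n-2}\circ A_{n-2}=A_{n-1}^{-1}\circ G_{n-2}\circ A_{n-2}.
\end{align*}
The last line is precisely the middle formula for $j=n-2$. Together with $F_{n-1}=G_{n-1}\circ A_{n-1}$, this gives all the claimed identities.

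The argument has no real obstacle. The only points to check are that grouped compositions of non-constant polynomials are non-constant, so the lemma applies, and that the affine correction $A_{n-1}^{-1}$ is absorbed into the outermost remaining factor $G_{n-2}$, which keeps the degrees matched for the inductive hypothesis.
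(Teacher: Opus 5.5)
Your proof is correct and follows essentially the paper's approach: induction on $n$, one application of the two-factor lemma, and absorption of the resulting affine map into the adjacent factor. The only difference is the end you peel off. You split off the outermost factor $F_{n-1}$, so $\deg a=\deg c$ is given directly and $A_{n-1}^{-1}$ is absorbed into $G_{n-2}$. The paper splits off the innermost factor $F_0$, which needs the small observation $\deg(F_{n-1}\circ\cdots\circ F_1)=\deg(G_{n-1}\circ\cdots\circ G_1)$, and absorbs $A_1$ into $G_1$.
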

\begin{proof}
We argue by induction on $n$. The case $n=1$ is trivial.

Assume $n\ge 2$, and set
\begin{align*}
U:=F_{n-1}\circ\cdots\circ F_1,
\qquad
V:=G_{n-1}\circ\cdots\circ G_1.
\end{align*}
Then $U\circ F_0=V\circ G_0.$
Since $\deg F_0=\deg G_0$, we have $\deg U=\deg V$. By the lemma, there
exists an affine map $A_1$ such that $F_0=A_1^{-1}\circ G_0$ and $U=V\circ A_1.$
Hence
\begin{align*}
F_{n-1}\circ\cdots\circ F_1
=
G_{n-1}\circ\cdots\circ G_2\circ (G_1\circ A_1).
\end{align*}
Applying the induction hypothesis to these two decompositions of length $n-1$,
we obtain affine maps $A_2,\dots,A_{n-1}$ such that
\begin{align*}
F_1&=A_2^{-1}\circ G_1\circ A_1,\\
F_j&=A_{j+1}^{-1}\circ G_j\circ A_j \qquad (2\le j\le n-2),\\
F_{n-1}&=G_{n-1}\circ A_{n-1}.
\end{align*}
Together with $F_0=A_1^{-1}\circ G_0$, this proves the result.
\end{proof}

\begin{lem}\label{lem: decomposition}
Let $f_i, 0\le i\le n-1$ be monic polynomials of degree $d\geq 2$. The composition $f_{n-1}\cdots f_0$ is conjugate to $z^{d^n}$ or $\pm T_{d^n}$ if and only if
there exists complex numbers $c_j, \ell_j, 0\le j \le n-1$ such that
\begin{align*}
    f_j(x)=D_d(x-c_j,\ell_j)+c_{j+1},\qquad \ell_{j+1}=\ell_j^d,
\end{align*}
where the index is modulo $n$, and in the power map case $\ell_j=0$; and in the Chebyshev case, $\ell_0^{d^n-1}=1$.
\end{lem}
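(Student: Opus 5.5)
The plan is to prove both directions by viewing $f_j(x)=D_d(x-c_j,\ell_j)+c_{j+1}$ as $\tau_{c_{j+1}}\circ D_d(\cdot,\ell_j)\circ\tau_{-c_j}$, where $\tau_c(x)=x+c$. Then the composition telescopes, and Lemma~\ref{lem:basic-dick}(3) collapses the Dickson factors. The converse will come from Corollary~\ref{cor: ritt} together with a monicity argument.

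\emph{The ``if'' direction.} Since indices are modulo $n$, we have $c_n=c_0$, and the inner translations cancel:
\begin{align*}
f_{n-1}\circ\cdots\circ f_0=\tau_{c_0}\circ D_d(\cdot,\ell_{n-1})\circ\cdots\circ D_d(\cdot,\ell_0)\circ\tau_{-c_0}.
\end{align*}
Because $\ell_{j+1}=\ell_j^d$, repeated use of Lemma~\ref{lem:basic-dick}(3) turns the middle composition into $D_{d^n}(\cdot,\ell_0)$. The hypothesis $\ell_0^{d^n}=\ell_0$ makes this compatible with the cyclic indexing. In the power case $\ell_0=0$, and Lemma~\ref{lem:basic-dick}(1) gives $x^{d^n}$. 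In the Chebyshev case $\ell_0^{d^n-1}=1$, and Lemma~\ref{lem:dickson-conjugacy}, applied with degree $d^n$, shows that $D_{d^n}(\cdot,\ell_0)$ is affinely conjugate to $\pm T_{d^n}$.

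\emph{The ``only if'' direction.} First I normalize. Suppose $F:=f_{n-1}\circ\cdots\circ f_0$ is conjugate to $\pm T_{d^n}$. By Lemma~\ref{lem:dickson-conjugacy}, $\pm T_{d^n}$ is conjugate to $D_{d^n}(\cdot,\zeta)$ for some $\zeta$ with $\zeta^{d^n-1}=1$. Hence $F=M^{-1}\circ D_{d^n}(\cdot,\zeta)\circ M$ with $M(x)=ux+v$. Since $F$ is monic, $u^{d^n-1}=1$. By Lemma~\ref{lem:basic-dick}(4),
\begin{align*}
u^{-1}D_{d^n}(ux,\zeta)=u^{d^n-1}D_{d^n}(x,\zeta u^{-2})=D_{d^n}(x,\ell_0),\qquad \ell_0:=\zeta u^{-2},
\end{align*}
and $\ell_0^{d^n-1}=1$. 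So the scaling can be absorbed, and we get $F=\tau_{c_0}\circ D_{d^n}(\cdot,\ell_0)\circ\tau_{-c_0}$ for a suitable $c_0$. In the power case the same argument works with $\ell_0=0$.

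Next I define $\ell_j:=\ell_0^{d^j}$ and expand $D_{d^n}(\cdot,\ell_0)=D_d(\cdot,\ell_{n-1})\circ\cdots\circ D_d(\cdot,\ell_0)$ using Lemma~\ref{lem:basic-dick}(3). I then apply Corollary~\ref{cor: ritt} with $F_j=f_j$ and
\begin{align*}
G_0=D_d(\cdot,\ell_0)\circ\tau_{-c_0},\qquad G_j=D_d(\cdot,\ell_j)\ (1\le j\le n-2),\qquad G_{n-1}=\tau_{c_0}\circ D_d(\cdot,\ell_{n-1}).
\end{align*}
This yields affine maps $A_j(x)=a_jx+b_j$ with $f_j=A_{j+1}^{-1}\circ G_j\circ A_j$; at the two ends $A_0$ and $A_n$ are taken to be the identity.

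The step I expect to need the most care is showing that every $A_j$ is a translation. Comparing leading coefficients of these monic polynomials gives $a_j^d=a_{j+1}$ with $a_0=1$, so inductively $a_j=1$ for all $j$. Since $A_n$ is the identity, there is no leftover root-of-unity ambiguity at the end of the chain. Writing $A_j=\tau_{-c_j}$ for $1\le j\le n-1$ then gives
\begin{align*}
f_j(x)=D_d(x-c_j,\ell_j)+c_{j+1},
\end{align*}
with $c_n=c_0$ because the extra translation $\tau_{c_0}$ sits in $G_{n-1}$. Finally, $\ell_{j+1}=\ell_j^d$ holds by construction, cyclically as well since $\ell_0^{d^n}=\ell_0$.
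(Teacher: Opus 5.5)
Your proof is correct. Your ``if'' direction is the same as the paper's Step 2: you telescope the translations, collapse the factors with Lemma~\ref{lem:basic-dick}(3), and then apply the scaling identity, which you invoke in packaged form as Lemma~\ref{lem:dickson-conjugacy} in degree $d^n$.

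Your ``only if'' direction is organized differently. You absorb the conjugacy globally first, writing the composition as $\tau_{c_0}\circ D_{d^n}(\cdot,\ell_0)\circ\tau_{-c_0}$ with $\ell_0^{d^n-1}=1$ (or $\ell_0=0$). You then split this into the monic factors $D_d(\cdot,\ell_j)$ and run Corollary~\ref{cor: ritt} with identity maps at both ends. Monicity then forces every intermediate affine map to be a translation, and the cyclic condition on $\ell_0$ is inherited from the normalization.

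The paper instead runs Corollary~\ref{cor: ritt} on the factorization of $A^{-1}\circ(\pm T_{d^n})\circ A$ into signed copies of $T_d$ (or copies of $z^d$), keeping $A_0=A_n=A$. Writing $A_j(x)=\alpha_j(x-c_j)$, monicity gives $\alpha_{j+1}=\sigma_j\alpha_j^d$. Each factor is converted to Dickson form via Lemma~\ref{lem:basic-dick}(4) with $\ell_j:=\alpha_j^{-2}$, and $\ell_0^{d^n-1}=1$ comes from $\alpha_n=\alpha_0$.

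Your version avoids carrying scalings and signs along the chain. The paper's version shows directly that the Dickson parameters are the inverse squares of the Ritt scalings.

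Two cosmetic points.
\begin{enumerate}
\item Lemma~\ref{lem:dickson-conjugacy} as stated does not literally say that $-T_{d^n}$ is conjugate to some $D_{d^n}(\cdot,\zeta)$ with $\zeta^{d^n-1}=1$. You can get this by taking $\zeta=\lambda^2$ with $\lambda^{d^n-1}=-1$ and using Lemma~\ref{lem:basic-dick}(4). Alternatively, start from $\sigma T_{d^n}=\sigma D_{d^n}(\cdot,1)$ and let monicity give $\sigma u^{d^n-1}=1$ and $\ell_0=u^{-2}$.
\item For $n=1$ your two definitions of $G_0=G_{n-1}$ conflict, but in that case the normalization already is the conclusion.
\end{enumerate}
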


\begin{proof}
\textbf{Step 1. We first prove the direct implication.}

\noindent
\textbf{Power case.}
Applying Corollary~\ref{cor: ritt} with $F_j=f_j$ and $G_j(z)=z^d$, there exist affine maps $A_0,\cdots,A_n=A_0$ such that
\begin{align*}
f_j(x)=A_{j+1}^{-1}\bigl(A_j(x)^d\bigr).
\end{align*}
Write $A_j(x)=\alpha_j(x-c_j),$
then
\begin{align*}
f_j(x)=\frac{\alpha_j^d}{\alpha_{j+1}}(x-c_j)^d+c_{j+1}=(x-c_j)^d+c_{j+1},
\end{align*}
where the second equality is because $f_j$ is monic. By Lemma~\ref{lem:basic-dick}(1), $f_j(x)=D_d(x-c_j,0)+c_{j+1}$. So we take $\ell_j=0$.

\smallskip

\noindent
\textbf{Chebyshev case.}
Here
\begin{align*}
f_j(x)=A_{j+1}^{-1}\bigl(\sigma_jT_d(A_j(x))\bigr)
=\frac{\sigma_j}{\alpha_{j+1}}\,T_d(\alpha_j(x-c_j))+c_{j+1},
\end{align*}
where $\sigma_j\in\{\pm1\}$.
Since $f_j$ is monic and $T_d$ is monic, comparing leading coefficients gives $\alpha_{j+1}=\sigma_j\,\alpha_j^d.$
Define $\ell_j:=\alpha_j^{-2}.$
Then
\begin{align*}
\ell_{j+1}
=
\alpha_{j+1}^{-2}
=
(\sigma_j\alpha_j^d)^{-2}
=
\alpha_j^{-2d}
=
\ell_j^d.
\end{align*}
Since $\alpha_0=\alpha_n$, we have $\ell_0=\ell_n=\ell^{d^n}_0$, and $\ell^{d^n-1}=1$.

By Lemma~\ref{lem:basic-dick}(1) and (4), we have $\alpha_j^{-d}T_d(\alpha_j y)=D_d(y,\alpha_j^{-2})=D_d(y,a_j).$
Therefore
\begin{align*}
f_j(x)
&=
\frac{\sigma_j}{\alpha_{j+1}}\,T_d(\alpha_j(x-c_j))+c_{j+1}\\
&=
\alpha_j^{-d}T_d(\alpha_j(x-c_j))+c_{j+1}
=
D_d(x-c_j,\ell_j)+c_{j+1}.    
\end{align*}

\textbf{Step 2. We now prove the converse.}
Let $\tau_c(x):=x+c$. Then
\begin{align*}
f_j=\tau_{c_{j+1}}\circ D_d(\,\cdot\,,\ell_j)\circ \tau_{-c_j},
\end{align*}
hence
\begin{align*}
f_{n-1}\circ\cdots\circ f_0
=
\tau_{c_0}\circ
D_d(\,\cdot\,,\ell_{n-1})\circ\cdots\circ D_d(\,\cdot\,,\ell_0)
\circ \tau_{-c_0}.
\end{align*}
Since $\ell_{j+1}=\ell_j^d$, we have inductively $\ell_j=\ell_0^{d^j}.$
Therefore, by repeated use of Lemma~\ref{lem:basic-dick}(3),
we obtain
\begin{align*}
D_d(\,\cdot\,,\ell_{n-1})\circ\cdots\circ D_d(\,\cdot\,,\ell_0)
=
D_{d^n}(\,\cdot\,,\ell_0).
\end{align*}
Thus
\begin{align*}
f_{n-1}\circ\cdots\circ f_0
=
\tau_{c_0}\circ D_{d^n}(\,\cdot\,,\ell_0)\circ \tau_{-c_0}.
\end{align*}

If $\ell_0=0$, then $D_{d^n}(x,0)=x^{d^n}$, so the composition is affinely
conjugate to $z^{d^n}$.

Assume now that $\ell_0^{d^n-1}=1$. Set $N:=d^n$, and
choose $\mu\in\Cbb^*$ such that $\mu^2=\ell_0$. Using the scaling identity Lemma~\ref{lem:basic-dick}(4),
we get
\begin{align*}
\mu^{-1}D_N(\mu x,\ell_0)=\mu^{N-1}T_N(x).
\end{align*}
Since
\begin{align*}
(\mu^{N-1})^2=\mu^{2(N-1)}=\ell_0^{N-1}=1,
\end{align*}
we have $\mu^{N-1}=\pm1$. Hence $D_N(x,\ell_0)$ is affinely conjugate to
$\pm T_N$, and therefore so is $f_{n-1}\circ\cdots\circ f_0$.
\end{proof}

\section{Proof of Theorem~\ref{thm: main}}\label{sect: proof}
In this section we prove our main result. Given a polynomial $H(z)$, the notation $[z^k]H$ means the coefficient of $z^k$ in $H(z)$.
\subsection{Special on the base and on infinitely many fibers implies special}
We first prove the following key theorem.

\begin{thm}\label{thm: main'}
   Let $f(z,w)=\big(p(z),q(z,w) \big)$ be a skew product of degree $d\geq 2$ such that $p$ is special. Suppose moreover there exist infinitely many $p$-periodic points $z_0$ of exact period $n=n(z_0)$ such that $Q_{z_0}^n$ is special, then $f$ is special.

   Conversely, if $f$ is special, then $p$ and $Q_{z_0}^n$ are special for all $p$-periodic points $z_0$ of exact period $n=n(z_0)$.
\end{thm}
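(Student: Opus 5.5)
Both directions reduce, via a triangular conjugacy, to a normal form and then to Lemma~\ref{lem: decomposition} applied fiberwise.

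\textbf{Converse.} Triangular conjugacy $(a(z),b(z,w))$ conjugates $p$ by the affine map $a$. It also conjugates each first-return map $Q^n_{z_0}$ by the affine map $b(z_0,\cdot)$. So it suffices to treat the normal forms. In case~\eqref{eq:tagger1}, $Q^n_{z_0}=Q^n$ with $Q\in\Ecal_d$, which is special. In case~\eqref{eq:tagger2}, along the orbit $z_j=z_0^{d^j}$ we get $\ell_j=\zeta z_j^m$, and $\zeta^{d-1}=1$ gives $\ell_{j+1}=\zeta z_0^{md^{j+1}}=\ell_j^d$. If $z_0=0$ then $\ell_0=0$. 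Otherwise $z_0^{d^n}=z_0$, so $z_0^{d^n-1}=1$, hence $\ell_0^{d^n-1}=1$ (using $\zeta^{d^n-1}=1$). The converse part of Lemma~\ref{lem: decomposition}, with $c_j=0$, then shows $Q^n_{z_0}$ is special.

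\textbf{Direct implication, normalization.} First conjugate so that $p\in\Ecal_d$ and $q_z$ is monic with vanishing $w^{d-1}$-coefficient. Regularity forces the leading coefficient of $q_z$ in $w$ to be a nonzero constant, and the translation $w\mapsto w+c(z)$ then kills the $w^{d-1}$ term. These are triangular affine changes, provided degree considerations keep $c(z)$ of degree $\le 1$. After this, for each of the infinitely many good $z_0$ with orbit $z_j$, Lemma~\ref{lem: decomposition} applied to $f_j=q_{z_j}$ gives
\begin{align*}
q_{z_j}(x)=D_d(x-c_j,\ell_j)+c_{j+1},\qquad \ell_{j+1}=\ell_j^d.
\end{align*}
Comparing $w^{d-1}$-coefficients forces $c_j=0$, so $q_{z_j}=D_d(\cdot,\ell_j)$. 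Reading off the $w^{d-2}$-coefficient via Lemma~\ref{lem:basic-dick}(2), this is $-d\,\varphi(z)$ for the polynomial $\varphi(z):=-\tfrac1d[w^{d-2}]q(z,w)$, with $\ell_j=\varphi(z_j)$. So $q(z_j,w)=D_d(w,\varphi(z_j))$ on infinitely many points $z_j$, hence identically. Lemma~\ref{lem: degm} then gives $\deg\varphi\le 2$.

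\textbf{Main obstacle: the functional equation.} It remains to pin down $\varphi$ from
\begin{align*}
\varphi(p(z))=\varphi(z)^d \quad\text{for infinitely many periodic } z, \text{ hence identically,}
\end{align*}
with $p\in\{z^d,\pm T_d\}$ and $\deg\varphi\le2$. I expect this to be the heart of the argument, the equation~\eqref{eq:heart}.
\begin{itemize}
\item If $\varphi$ is constant, then $\varphi^{d-1}=1$ or $\varphi=0$. Lemma~\ref{lem:dickson-conjugacy} and Lemma~\ref{lem:basic-dick}(1) then give the form~\eqref{eq:tagger1}.
\item If $\deg\varphi\ge1$, the relation $\varphi\circ p=\varphi^d$ says $p$ is semiconjugate to $z^d$ via a map of degree $1$ or $2$.
\begin{itemize}
\item For $p=z^d$: comparing coefficients shows $\varphi=\zeta z^m$ with $\zeta^{d-1}=1$, $m\in\{1,2\}$ (lower-order terms are killed by the growth of exponents). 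This is~\eqref{eq:tagger2}.
\item For $p=\pm T_d$: degree~1 is impossible, since $T_d$ is not conjugate to a power map. Degree~2 would require $\varphi\circ T_d=\varphi^d$ with $\varphi$ quadratic. Comparing critical-value structure, or checking the coefficients of $[z^k]$, rules this out: the Chebyshev case has two finite postcritical points, whereas $\varphi^d$ has totally ramified structure.
\end{itemize}
\end{itemize}
Throughout, one must track the scalings introduced by the initial normalization, using Lemma~\ref{lem:basic-dick}(4), so that the final conjugacy stays triangular.
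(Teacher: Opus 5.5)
Your argument matches the paper's everywhere except at the key case of the functional equation, and there it is asserted rather than proved.

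\textbf{Where you agree with the paper.} You center $q_z$ globally by $w\mapsto w+c(z)$ with $c(z)=-\tfrac1d[w^{d-1}]q$ and then read off $c_j=0$; this is the same as the paper's check $c(z_j)=c_j$. The interpolation giving $q(z,w)=D_d(w,\varphi(z))$ and \eqref{eq:heart} is the paper's. Your converse is the paper's identity $Q^n_z=D_{d^n}(w,\varphi(z))$, routed through the converse of Lemma~\ref{lem: decomposition}.

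\textbf{The gap.} It sits in the step you yourself call the heart. For $p=\pm T_d$ with $\deg\varphi=2$, you only assert that critical-value structure or coefficient checking excludes $\varphi\circ p=\varphi^d$. Neither is carried out. The reason you give is a heuristic, not a contradiction: that $\pm T_d$ has two finite postcritical points clashes with nothing until you examine the fibre of $\varphi\circ p=\varphi^d$ over $0$. Even there, a crude count of local degrees only settles $d\ge 5$: the left side has local degree at most $2\cdot 2$, while $\varphi^d$ has local degree at least $d$ at a zero of $\varphi$. This is exactly the case that rules out a Chebyshev-base analogue of \eqref{eq:tagger2}, so the direct implication is not yet established. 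You also wrote only $T_d$; the sign $-T_d$ must be covered too.

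\textbf{How to close it.} One observation handles every case at once. If $\varphi$ is nonconstant, $Z:=\varphi^{-1}(0)$ is finite and nonempty, and $\varphi\circ p=\varphi^d$ gives $p(Z)\subset Z$ and $p^{-1}(Z)\subset Z$. The fibres $p^{-1}(z)$, $z\in Z$, are nonempty and pairwise disjoint, so each is a single point. Hence $p$ has $|Z|$ distinct critical points of multiplicity $d-1$, which forces $|Z|=1$ and $p(z)=a(z-z^*)^d+z^*$ with $Z=\{z^*\}$.
\begin{itemize}
\item For $\pm T_d$ this is impossible: the critical points are simple when $d\ge 3$, and for $d=2$ the critical point $0$ is not fixed.
\item For $p=z^d$ it gives $\varphi=\zeta z^m$ directly, without coefficient comparison; Lemma~\ref{lem: degm} then restricts $m$.
\end{itemize}

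\textbf{Comparison with the paper.} This is the same zero-set mechanism the paper uses in the power case. In the Chebyshev case the paper instead conjugates $p$ to $D_d(\cdot,\nu)$ and lifts through $t\mapsto t+\nu/t$ to a Laurent polynomial with $u(t^d)=u(t)^d$ and $u(t)=u(\nu/t)$. It then kills the resulting monomial by this symmetry. Once the invariant-set step is filled in, your route stays on $\Cbb$, avoids the lift, and is uniform in $p$.
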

\begin{proof}
\textbf{We first prove the first assertion.}
\textbf{Step 1.}
Up to a triangularly conjugation, we may assume that $p\in \Ecal_d$.

By Lemma~\ref{lem: decomposition} applied to the decomposition $Q_{z_0}^{(n)}$,
there exist complex numbers $c_i$, $\ell_j$ $0 \le i,j \le n-1$
such that
\begin{align}\label{eq: mainproof_1}
    q(z_j,w)=D_d(w-c_j,\ell_j)+c_{j+1} \text{  and  }\ell_{j+1}=\ell_j^d
\qquad (0\leq j\leq n-1)
\end{align}
where the index is modulo $n.$

Set
\begin{align*}
c(z):=-\frac{1}{d}[w^{d-1}]\,q(z,w).
\end{align*}
By Lemma~\ref{lem:basic-dick}(2), we have $c_j =-\frac{1}{d} [w^{d-1}]D_d(w-c_j,\ell_j),$
and thus $c(z_j)=c_j.$
Therefore, by Eq~\eqref{eq: mainproof_1}, $c(p(z_j))=c(z_{j+1})=c_{j+1}.$

Define the conjugation $\Psi(z,w)=(z,w+c(z))$, we get
\begin{align*}
    \widetilde f(z,u):=\Psi^{-1}\circ f\circ\Psi=(p(z),\widetilde q(z,u) ),
\end{align*}
where $\widetilde q(z,u):=q(z,u+c(z))-c(p(z))$ is a centered polynomial of degree $d$, i.e., $[u^{d-1}]\widetilde q=0$. Moreover 
\begin{align*}
\widetilde q(z_j,u)
=
q(z_j,u+c_j)-c_{j+1}
=
D_d(u,\ell_j).
\end{align*}

Define 
\begin{align*}
\varphi(z):=-\frac{1}{d}[u^{d-2}]\,\widetilde q(z,u) \qquad \text{and} \qquad R(z,u):=\widetilde q(z,u)- D_d(u,\varphi(z)).
\end{align*}
By lemma~\ref{lem:basic-dick}(2), $[u^{d-2}]D_d(u,\ell_j)=-d \ell_j$ and thus $\varphi(z_j)=\ell_j.$ Therefore, for infinitely many periodic points $z$, $R(z,u)\equiv0$ as a polynomial in $u.$
Since any infinite subset of $\mathbb{A}^1$ is Zariski dense, the coefficient of $R(z,u)$ as a polynomial in $u$ vanishes identically in $z$. In other words, we have 
\begin{align*}\widetilde q(z,u)\equiv D_d(u,\varphi(z)).\end{align*}
Moreover, since $\ell_j^d=\ell_{j+1}$, we have
\begin{align*}\varphi(z_j)^d=\ell_j^d=\ell_{j+1}=\varphi(z_{j+1})=\varphi(p(z_j)).
\end{align*}
Again by Zariski density of periodic points, we thus obtain the functional equation
\begin{align}\tag{$\heartsuit$}\label{eq:heart}
        \varphi(p(z))=\varphi(z)^d.
    \end{align}

\textbf{Step 2. We now solve this functional equation.}

\textbf{Power case.}
Assume $p(z)=z^d$. Then Eq.~\eqref{eq:heart} is $\varphi(z^d)=\varphi(z)^d$.
If $\varphi\equiv 0$, then $\widetilde f(z,w)=(z^d,w^d)$. Assume $\varphi\not\equiv 0$.

If $\alpha\neq 0$ is a zero of $\varphi$, then every $d^k$-th root of $\alpha$ is also a zero of $\varphi$, giving infinitely many distinct zeros, impossible for a polynomial. Hence the only possible zero of $\varphi$ is $0$.

Therefore $\varphi(z)=\zeta z^m$
for some $\zeta\in \mathbb{C}^\ast$ and some $m\geq 0$. By Lemma~\ref{lem: degm}, $m=0,1,2$. 
Substituting into $\varphi(z^d)=\varphi(z)^d$
gives $\zeta z^{md}=\zeta^d z^{md},$
hence $\zeta^{d-1}=1.$ If $m=0$, then $\widetilde f$ is triangularly conjugate to $\big(z^d,\pm T_d(w)\big)$, else $\widetilde f(z,w)=\big(z^d,D_d(w,\zeta z^m) \big)$.

\textbf{Chebyshev case.}
Assume $p=\sigma T_d$, $\sigma\in\{\pm\}$.
Choose $\mu\in\Cbb^*$ such that $\mu^{d-1}=\sigma,$
and set $\nu:=\mu^2.$
Then $\nu^{d-1}=1.$
By Lemma~\ref{lem:basic-dick}(4),
\begin{align*}
D_d(\mu x,\mu^2)=\mu^d D_d(x,1)=\mu^d T_d(x),
\end{align*}
hence
\begin{align*}
\mu^{-1}D_d(\mu x,\nu)=\mu^{d-1}T_d(x)=\sigma T_d(x)=p(x).
\end{align*}
Thus, if $A(x):=\mu x$, then $p=A^{-1}\circ D_d(\,\cdot\,,\nu)\circ A.$

Set $g:=\varphi\circ A^{-1}.$
Then the identity $\varphi\circ p(z) =\varphi(z)^d$
becomes
\begin{align*}
g\circ D_d(\,\cdot\,,\nu)=g^d.
\end{align*}

Now define
\begin{align*}
\pi_\nu(t):=t+\frac{\nu}{t},
\qquad
u(t):=g(\pi_\nu(t))\in\Cbb[t,t^{-1}].
\end{align*}
Therefore
\begin{align*}
D_d\!\left(\pi_\nu(t),\nu\right)
=
D_d\!\left(t+\frac{\nu}{t},\nu\right)
=
t^d+\frac{\nu^d}{t^d}
=
t^d+\frac{\nu}{t^d}
=
\pi_\nu(t^d).
\end{align*}
Hence
\begin{align*}
u(t^d)=u(t)^d.
\end{align*}
Now we need to solve this functional equation.
Suppose first that $u$ has a zero in $\Cbb^*$. Then all iterated $d^n$-th
roots of that zero are again zeros of $u$, yielding infinitely many zeros in
$\Cbb^*$, impossible for a Laurent polynomial. Thus 
\begin{align*}
u(t)=\eta t^m
\end{align*}
for some $\eta\in\Cbb^*$ and $m\in \Zbb$.

On the other hand, $\pi_\nu(t)=\pi_\nu(\nu/t),$
so $u(t)=u(\nu/t).$
Thus $\eta t^m=\eta (\nu/t)^m=\eta\,\nu^m t^{-m}$,
which forces $m=0$. Hence $u$ is constant, so $g$ is constant, and
therefore $\varphi$ is constant.
Finally, if $\varphi\equiv\zeta$, then $\zeta=\zeta^d,$
so either $\zeta=0$ in which case $\widetilde f(z,w)=\big(\pm T_d(z), w^d\big)$ or else $\zeta^{d-1}=1$, in which case $\widetilde f(z,w)=\big(\pm T_d(z), \pm T_d(w)\big)$.\\

\medskip
\textbf{We now prove the converse.}
Assume $p\in \Ecal_d$ and
\begin{align*}
q(z,w)=D_d(w,\varphi(z))
\qquad\text{and}\qquad
\varphi\circ p(z)=\varphi(z)^d.
\end{align*}
We claim that for every $n\geq 1$, $Q_z^{n}(w)=D_{d^n}(w,\varphi(z)).$
We prove this by induction on $n$.
For $n=1$, this is exactly the definition of $q$.

Assume it holds for some $n\geq 1$. Then
\begin{align*}
Q_z^{n+1}(w)
&=
q_{p^n(z)}\bigl(Q_z^{n}(w)\bigr)\\
&=
D_d\!\Bigl(Q_z^{n}(w),\,\varphi(p^n(z))\Bigr)\\
&=
D_d\!\Bigl(D_{d^n}(w,\varphi(z)),\,\varphi(z)^{d^n}\Bigr)\\
&=
D_{d^{n+1}}(w,\,\varphi(z)).
\end{align*}
This proves the claim.

Let $z$ be a periodic point of period $n$ for $p$. If $\varphi(z)=0$, then $Q_z^{n}(w)=w^{d^n}$.
If $\varphi(z)\neq 0$, then from $\varphi(p^n(z))=\varphi(z)^{d^n}=\varphi(z)$,
we get $\varphi(z)^{d^n-1}=1.$
Therefore Lemma~\ref{lem:dickson-conjugacy} shows that $D_{d^n}(x,\varphi(z))$
is affinely conjugate to $\pm T_{d^n}$.
\end{proof}

\subsection{Proof of (1)$\iff$(2)}
We need the following lemma, see e.g., Silverman's book~\cite{SilvermanBook} and Milnor's survey~\cite{milnor06}.
\begin{lem}\label{lem:onedim-affine-semi}
Let $L$ be an algebraic group of dimension $1$, let $\varphi:L\to L$ be an affine map, and let $f:\Pbb^1\to\Pbb^1$ and
$\Pi:L\dashrightarrow\Pbb^1$ be nonconstant rational maps such that
\begin{align*}
f\circ \Pi = \Pi\circ \varphi.
\end{align*}
Then the following hold.
\begin{enumerate}
\item If $L=\Gbb_a$, then $\deg f = 1$.
\item If $L=\Gbb_m$, then $f$ is conjugate to $z\mapsto z^{\pm \deg f}$ or $\pm T_{\deg f}$.
\item If $L$ is an elliptic curve, then $f$ is a Latt\`es map.
\end{enumerate}
\end{lem}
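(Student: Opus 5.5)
This is the classical one-dimensional classification, and I will follow the standard argument (Milnor, Silverman). The idea is to lift $\Pi$ to the universal cover of $L$ and turn the functional equation into an analytic or algebraic constraint. Write $\varphi = t\circ\varphi_0$ with $\varphi_0$ an endomorphism. The degree relation $\deg f\cdot \deg\Pi = \deg\Pi\cdot\deg\varphi$ gives $\deg f=\deg\varphi$, where $\deg\varphi$ is the topological degree of $\varphi$ as a self-map of the curve $L$.

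\textbf{Case $L=\Gbb_a$.} Endomorphisms of $\Gbb_a$ over $\Cbb$ that are algebraic are $x\mapsto ax$, so $\varphi(x)=ax+b$ is an automorphism (surjectivity forces $a\neq0$). Hence $\deg\varphi=1$, and therefore $\deg f=1$.

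\textbf{Case $L=\Gbb_m$.} Endomorphisms are $x\mapsto x^k$, so $\varphi(x)=tx^k$ with $|k|=\deg f=:d$. Conjugating by the translation $x\mapsto sx$ with $s^{k-1}=t$ (possible when $k\neq1$, i.e.\ $d\ge2$) reduces to $\varphi(x)=x^{k}$. The case $d=1$ is trivial. The map $\Pi$ then semiconjugates $x^{\pm d}$ to $f$, and the key step is to show that $\Pi$ can be replaced by a quotient map. Let $\Gamma\subset \mathrm{Aut}(\Gbb_m)$ be the group of automorphisms $\gamma$ with $\Pi\circ\gamma=\Pi$. I will show $\Gamma$ acts transitively on fibers of $\Pi$, i.e.\ that $\Pi$ is Galois with deck group inside $\{x\mapsto \omega x^{\pm1}\}$. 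This is the main obstacle.

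The argument I would try is the one via Julia sets and measures. $\Pi$ pushes the Haar-type invariant measure (the Lebesgue measure on the unit circle) to the maximal entropy measure of $f$. The local branches of $\Pi^{-1}\circ\Pi$ commute with $\varphi$ up to the dynamics, so by a standard argument of Ritt/Eremenko they are isometries of the flat structure on $\Gbb_m=\Cbb/2\pi i\Zbb$, hence global automorphisms of $L$. Given this, $\Pi$ factors as $L\to L/\Gamma\cong\Pbb^1$ followed by a Möbius map. There are only two options for $\Gamma$ up to conjugation, namely cyclic groups $\mu_r$ or dihedral groups. Taking the quotient by $\mu_r$ (via $x\mapsto x^r$) keeps a power map. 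Taking the quotient by the involution $x\mapsto 1/x$ (via $x+1/x$) yields $\pm T_d$; the sign comes from the translation part that remains after the normalization. Consequently $f$ is conjugate to $z^{\pm d}$ or $\pm T_d$.

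\textbf{Case $L$ an elliptic curve.} This is by definition: the equation exhibits $f$ as semiconjugate to an affine map of an elliptic curve through a nonconstant rational map, which is precisely the definition of a Lattès map recalled in the introduction. So no further argument is needed beyond noting $\deg f\ge1$ and, if one insists on the quotient description, the same Galois-closure argument as in the multiplicative case.
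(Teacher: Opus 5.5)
The paper gives no proof of this lemma; it refers to Silverman and Milnor, so the comparison is with the classical argument. Your parts (1) and (3) are fine, (3) being definitional under the convention of the introduction. The second half of your plan for (2) is also fine. Once $\Pi$ is known to be Galois with deck group $\Gamma\subset\{x\mapsto\omega x^{\pm1}\}$, rescale so that $\Gamma=\langle\mu_r,\,x\mapsto 1/x\rangle$ and $\varphi(x)=tx^k$. In the dihedral case, $\varphi$ normalizing $\Gamma$ forces $t^2\in\mu_r$, hence $f=\pm T_d$ on the quotient; in the cyclic case $f$ is conjugate to $z^{k}$.

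The gap is the Galois property itself. It is the entire content of (2), and you only announce it. The mechanism you cite does not deliver it. Commuting with $\varphi$ only says that $\varphi\circ\gamma=\gamma'\circ\varphi$ for another branch $\gamma'$, so conjugating a branch by $\varphi^n$ yields branches on disks of radius $d^n\epsilon$. In the logarithmic coordinate this multiplies the nonlinearity $\gamma''/\gamma'$ by $d^{-n}$. Koebe distortion on the big disks therefore only returns a bound $O(1/\epsilon)$ on the nonlinearity of $\gamma$, not its vanishing. The measure fact you state, $\Pi_*\mu_\varphi=\mu_f$, is also too weak. A pushforward only sees the sum over the sheets of $\Pi$, and says nothing about whether an individual local branch of $\Pi^{-1}\circ\Pi$ preserves $\mu_\varphi$.

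What makes your approach work is the pullback identity $\Pi^*\mu_f=(\deg\Pi)\,\mu_\varphi$. Apply $\Pi^*\circ(f^n)^*=(\varphi^n)^*\circ\Pi^*$ to $\delta_a$ for a generic $a$ and divide by $d^n$. Then use equidistribution of preimages for both $f$ and $\varphi$ (this needs $d\ge 2$) and weak continuity of $\Pi^*$. With this identity, every local branch $\gamma$ of $\Pi^{-1}\circ\Pi$ near a non-critical point of the unit circle satisfies $\gamma_*\mu_\varphi=\mu_\varphi$, so it preserves arc length on the circle. Hence $\gamma(x)=\omega x^{\pm1}$ by the identity theorem, and $\Pi\circ\gamma=\Pi$ holds globally. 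Since $\Pi^{-1}(J_f)=\operatorname{supp}\Pi^*\mu_f$ is the unit circle, any two points of a fiber over a non-critical value in $J_f$ are mapped to one another by such a $\gamma$. That is the transitivity you want.

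Finally, your case $d=1$ is not trivial but false. Take $\varphi(x)=2x$, $\Pi=\mathrm{id}$, $f(z)=2z$: $f$ is not conjugate to $z^{\pm1}$ or $\pm z$. So the lemma needs $\deg f\ge 2$, which is the only case the paper uses.
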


We write the group law multiplicatively.
We already remarked that (1) implies (2) in Introduction. In fact, $\pm$Chebyshev maps are semiconjugate to power maps and we explained that $f_{2,m}(z,w)$ is conjugate to $(z^d,\pm T_d(w))$.
So it suffices to prove the converse.

Write $\Pi=(\alpha\circ\rho,\beta)$ and $K=\ker(\rho)$.
Since $\Pi$ is dominant and generically finite, there exists a finite set
$E\subset H$ such that for every $u\in H\setminus E$, the specialization
\begin{align*}
\beta_u:=\beta|_{\rho^{-1}(u)}: \rho^{-1}(u)\dashrightarrow \Pbb^1
\end{align*}
is a nonconstant rational map, and its degree is independent of $u$.

The first-coordinate relation in Eq.~\eqref{eq: semiconjug} reads $p\circ \alpha = \alpha\circ \bar g.$
Apply Lemma~\ref{lem:onedim-affine-semi} to $(L,\varphi,f,\Pi)=(H,\bar g,p,\alpha).$
Since $p$ is a polynomial of degree $d\ge 2$, we get $H\neq \Gbb_a$ from
Lemma~\ref{lem:onedim-affine-semi}(1), and $H$ cannot be elliptic by
Lemma~\ref{lem:onedim-affine-semi}(3). Therefore $H\simeq \Gbb_m$,
and $p$ is special.

Choose a periodic point $u_0\in H\setminus E$ for $\bar g$. Let $N$ be the exact period of $u_0$, and put $z_0:=\alpha(u_0).$
Then $z_0$ is periodic for $p$, with period $n(z_0) $ dividing $N$.

Pick $\xi_0\in \rho^{-1}(u_0)$ and let
\begin{align*}
\tau_{\xi_0}:K\longrightarrow \rho^{-1}(u_0),\qquad k\longmapsto \xi_0 k.
\end{align*}
Since $\rho\circ g^N = \bar g^N\circ \rho$ and $\bar g^N(u_0)=u_0$, the map
$g^N$ preserves the fiber $\rho^{-1}(u_0)$. Therefore $\phi_{u_0}:=\tau_{\xi_0}^{-1}\circ g^N\circ \tau_{\xi_0}$
is an affine self-map of $K$, and if we set $\widetilde\beta_{u_0}:=\beta_{u_0}\circ \tau_{\xi_0}$,
then the second coordinate of $f^N\circ\Pi=\Pi\circ g^N$ yields 
\begin{align*}
Q_{z_0}^N\circ \widetilde\beta_{u_0}
=
\widetilde\beta_{u_0}\circ \phi_{u_0}.
\end{align*}
Apply Lemma~\ref{lem:onedim-affine-semi} to $(L,\varphi,A,X)=(K,\phi_{u_0},Q_{z_0}^N,\widetilde\beta_{u_0}).$
Since $Q_{z_0}^N$ is a polynomial of degree $d^N>1$, we get $K\simeq \Gbb_m,$ and the polynomial $Q_{z_0}^N$ is special and thus so is $Q_{z_0}^{n(z_0)}$ since $n(z_0)$ divides $N$.
Now $G$ is an extension of $\Gbb_m$ by $\Gbb_m$, so we have $G\simeq \Gbb_m^2.$

The set of periodic points $u_0\in \Gbb_m\setminus E$ is infinite, and the
nonconstant rational map $\alpha:\Pbb^1\dashrightarrow\Pbb^1$ has finite
fibers on its domain of definition. Therefore the corresponding points
$z_0=\alpha(u_0)$ form an infinite set of periodic points of $p$.
For each such $z_0$, the first return map $Q_{z_0}^{n(z_0)}$ is special. Therefore $f$ is special by Theorem~\ref{thm: main'}.

\subsection{Proof of (1)$\iff$(3)}
Fix a periodic orbit
\begin{align*}
z_0,\dots,z_{n-1},
\qquad z_{j+1}=p(z_j),
\qquad z_n=z_0.
\end{align*}
Then the multiplier of $z_0$ is $(p^n)'(z_0)$ and $\partial_w(Q^n_{z_0}(w))$. Since they are all contained in a fixed number field, Theorem~\ref{thm: Huguin} implies that, $p$ and $Q^n_{z_0}$ are special. Then by Theorem~\ref{thm: main'}, we infer that $f$ is special.

Conversely, if $f$ is special, then Theorem~\ref{thm: main'} implies that $p$ is special and  $Q^n_{z_0}$ is also for all periodic points $z_0$ of exact period $n=n(z_0)$. But special polynomials have all their multipliers contained in $\Qbb$. This finishes the proof.

\bibliographystyle{alpha}
\bibliography{mybio}
\end{document}